\newtheorem{theorem}{Theorem}[section]
\newtheorem{corollary}[theorem]{Corollary}
\newtheorem{definition}[theorem]{Definition}
\newtheorem{lemma}[theorem]{Lemma}
\newtheorem{remark}[theorem]{Remark}
\newtheorem{conjecture}[theorem]{Conjecture}
\numberwithin{equation}{section}
\newcommand{\V}{\mathcal{V}}
\DeclareMathOperator{\nn}{\mathbb{N}}
\DeclareMathOperator{\real}{\mathbb{R}}
\DeclareMathOperator{\complex}{\mathbb{C}}
\DeclareMathOperator{\energy}{\mathscr{E}}
\DeclareMathOperator{\continuous}{\mathscr{C}}
\DeclareMathOperator{\domain}{\mathcal{D}}
\title
[Snowflake Domain with Boundary and Interior Energies]
{Discretization of the Koch Snowflake Domain with Boundary and Interior Energies}
\author[M. Gabbard]{Malcolm Gabbard}
\address[Malcolm Gabbard]{Colorado College\newline%
\indent Department of Mathematics and Computer Science}
\email{malcolm.gabbard@coloradocollege.edu}
\author[C. Lima]{Carlos Lima}
\address[Carlos Lima]{California State University Long Beach, Long Beach\newline%
\indent Department of Physics and Astronomy}
\email{carlos.lima@student.csulb.edu}
\author[G Mograby]{Gamal Mograby}
\address[Gamal Mograby]{University of Connecticut\newline%
\indent Department of Mathematics}
\email{gamal.mograby@uconn.edu}
\author[L.~G. Rogers]{Luke~G. Rogers}
\address[Luke Rogers]{University of Connecticut\newline%
\indent Department of Mathematics}
\email{luke.rogers@uconn.edu}
\author[A. Teplyaev]{Alexander Teplyaev}
\address[Alexander Teplyaev]{University of Connecticut\newline%
\indent Department of Mathematics}
\email{alexander.teplyaev@uconn.edu}
\begin{document}

\thanks{The authors are grateful to Kevin Marinelli for the support in implementing the numerical codes}
\thanks{Research supported in part by NSF DMS Grants 1659643  and 1613025.}
\date{\today}
%\subjclass{Primary 05C38, 15A15; Secondary 05A15, 15A18} %
\keywords{Koch snowflake domain, Laplacian, landscape function, localization, discrete approximations}%

\begin{abstract}
We study the discretization of a Dirichlet form on the Koch snowflake domain and its boundary with the property that both the interior and the boundary can support positive energy. We compute eigenvalues and eigenfunctions, and demonstrate the localization of high energy eigenfunctions on the boundary via a modification of an argument of Filoche and Mayboroda.  H\"older continuity and uniform approximation of eigenfunctions are also discussed. \tableofcontents
\end{abstract}

\maketitle
\pagestyle{headings}
%\markleft{M.GABBARD, C.LIMA, G.MOGRABY, L.G.ROGERS, A.TEPLYAEV}

\section{Introduction}

The main objective of this paper is to investigate a discrete version of the eigenvalue problem $\Delta u = \lambda u$ on the Koch snowflake domain, which we denote by $\Omega$. To this end,
we follow~\cite{MR3702726} and introduce a Dirichlet form (with a suitable domain) on  $\Omega$,
\begin{equation*}
\energy (u) := \int_{\Omega} (\nabla u)^2  d\mathcal{L}^2 + \energy_{\partial \Omega}(u|_{\partial \Omega}),
\end{equation*} 
where $\mathcal{L}^2$ is the usual Lebesgue measure on $\real^2$ and $\energy_{\partial \Omega}$
denotes the Kusuoka-Kigami Dirichlet form on the Koch snowflake boundary $\partial \Omega$. The novelty of this approach by comparison with past work~\cite{MR1337471,GrebenkovN13,MR994168,MR1412217,MR1350412,strichartz2019spectral} is that both the Euclidean interior and the fractal boundary carry non-trivial Dirichlet forms. We approximate the Dirichlet form $\energy$ by a sequence of discrete energies $\energy_n$ (Theorem~\ref{propositionDiscreteEnergy}). This is done by inductively constructing trianguations of $\overline\Omega=\Omega\cup\partial\Omega$, the edges of which are then treated as a sequence of finite planar graphs $\Gamma_n$ equipped with a discrete energy $\energy_n$ and measure $m_n$, and hence an inner product $\langle\cdot,\cdot\rangle$. We then define a discrete Laplacian $L_n$ such that
\begin{equation}
    \energy_n(u,v)=- \left\langle L_n u,v\right\rangle_n.
\end{equation}
Note that $L_n$ takes into account both the interior $\Omega$ and fractal boundary $\partial\Omega$. Our approach is related to several recent works on diffusion problems involving fractal membranes~\cite{MR1916407,MR3274752,MR3250804}. Moreover, it can be viewed as a generalization of work of Lapidus et.\ al.~\cite{MR1412217} on the eigenstructure of the Dirichlet Laplacian on $\Omega$.  Indeed, our numerical results on the spectra of $L_n$ are closely related to those for a discretization of the eigenvalue problem $\Delta u = \lambda u$ with Dirichlet boundary conditions due to a localization phenomenon that will be explored in Section~\ref{sec:numericalResults}.

One of the main goals of this paper is to investigate the impact of the fractal boundary on the eigenmodes of $L_n$. This problem is of general interest, particularly in physics where questions like ``How do ocean waves depend on the topography of the coastlines?'' and ``How do trees and wind interact?'' have already been studied. For these and more examples the reader is referred to \cite{PhysRevE.47.3013,sapoval1991vibrations} and references therein. We note in particular that Sapoval~\cite{SAPOVAL1989296} conducted an experimental investigation of acoustic vibration modes of with soap bubbles placed on fractal drums, and made the striking observation that the fractal boundary causes some low-frequency wave modes to localize.  Placing a soap bubble on the fractal boundary of a drum is mathematically equivalent to imposing Dirichlet boundary conditions; our situation is somewhat different, in that our model allows for non-trivial boundary energy, but nevertheless we find significant  localization effects, which may be summarized as follows:
\begin{itemize}
    \item The eigenvalue counting function has two regimes with different scaling. The threshold for the change in regimes is approximately the largest eigenvalue of the Dirichlet Laplacian on $\Omega$.
    \item Eigenfunctions with eigenvalues of $L_n$  below the threshold are not localized.
    \item Eigenfunctions with eigenvalues of $L_n$  above the threshold begin to show localization near $\partial\Omega$.  As the eigenvalue increases, so does this localization.
    \item Eigenfunctions corresponding to eigenvalues of $L_n$ that are significantly above the threshold are strongly localized on $\partial\Omega$.
\end{itemize}
A class of boundary-localized  high-frequency eigenmodes known as {\em whispering-gallery modes} are well-understood for convex domains, though an analytic explanation for their appearance in more general domains is not yet known~\cite{GrebenkovN13}.  However, our boundary modes do not seem to be whispering gallery modes, or any other known class of localized high-frequency modes. Rather, they appear to arise because the  boundary part of the Dirichlet form and the interior part have different scalings and hence interact only weakly.  An elementary way to see that such modes should appear in our model is given using a variant of the landscape map of Filoche and Mayboroda~\cite{filoche1,ADFJM} in Section~\ref{FilocheMayboroda Argument}.

The results given here are part of a long term study that aims to 
provide robust computational tools to address, in a fractal setting, a number of 
linear and nonlinear problems arising from physics
\cite{geim2000non,baelus2004vortex,akkermans1999vortices,akkermans2013statistical,dunne2012heat,akkermans2010thermodynamics}. 
The physics of magnetic fields, and of vector equations more generally, 
are particularly challenging on fractal spaces. 
Moreover, a discretization of the type considered here is expected to be essential in studying quantum walks~\cite{agliari2010quantum,ord1983fractal,Sabot12,aharonov1993quantum,kempe2003quantum}. On the abstract mathematical side, our work is   related to~\cite{Sabot06,Sabot07,hinz2018sobolev,capitanelli2014uniform,creo2018magnetostatic,brzoska2017spectra,basilica,HR,HT,j1,j2,j3,mosco1,mosco1994} and to the classical Venttsel's problem 
\cite{Apushkinskaya,Venttsel}. 

The paper is organized as follows. Section~\ref{sec:Koch Snowflake Domain}, follows the treatment in~\cite{MR3702726} to introduce a Dirichlet form on the Koch snowflake. In Section~\ref{sec:domainenergy} we give the Dirichlet form on the snowflake domain and discuss some of its properties, such as the H\"older continuity and uniform approximation of eigenfunctions. In Section~\ref{sec:Inductive mesh}
we construct a triangular grid to approximate the Koch snowflake domain and introduce the discrete Laplacian $L_n$.  Section~\ref{sec:numericalResults} is concerned with our algorithm and numerical results, including the existence of boundary-localized eigenfunctions and their effect on the eigenvalue counting function. Finally, in Section~\ref{FilocheMayboroda Argument} we show that the localized eigenfunctions $L_n$ can be predicted numerically using a variant of an argument from~\cite{filoche1}.

%%%%%%%%%%%%%%%%%%%%%%%%%%%%%%%%%%%%%%%%%%%%%%%%%%%%%%%%%%%%%%

\section{Dirichlet form on the Koch snowflake}
\label{sec:Koch Snowflake Domain}
The Koch snowflake and the associated snowflake domain are well-known.  Here we introduce some notation and foundational results  for our analysis, following~\cite{MR3702726}.  Let 
$\left\{F_i \right\}_{i=1}^4$ be the iterated function system defined on $\complex$ by 
\begin{align*}
F_1 (z) &= \frac{z}{3}  &  
F_2 (z) &= \frac{z}{3} e^{i\frac{\pi}{3}} + \frac{1}{3}  \\
F_3 (z) &= \frac{z}{3} e^{-i\frac{\pi}{3}} + \frac{3+i\sqrt{3}}{6}& 
F_4 (z) &= \frac{z+2}{3}. 
\end{align*}
The Koch curve is the unique nonempty compact subset 
$K$ of $\complex$ such that $K=\bigcup_{i=1}^4 F_i (K)$.  It can be approximated by a sequence of finite graphs for which the following notation is convenient.
\begin{definition} 
Let $S = \{ 1,2,3,4 \}$. We define $W = S^{\nn} $ and call $\omega \in W$ an infinite word. Similarly, a finite word of length $ n \in \nn$ is $w\in S^n$; we write $|w|=n$ for its length.
\end{definition} 
We write $F_{w} := F_{w_n}\circ \ldots \circ F_{w_1}$, where $w = w_1 \ldots w_n \in S^n$ and introduce finite graphs approximating the Koch curve as follows.
\begin{definition}\label{def:Vn}
Let $V_0(K)=\left\{0,1\right\} \subset \complex $ and $V_{w}(K):=F_{w}(V_0(K))$ for a finite word $w$. Then define
\begin{equation*}
V_n(K):=\bigcup_{|w|  =n} V_{w}(K),  \text{ and }\,  V_{\ast}(K):=\bigcup_{n \geq 0} V_n(K). 
\end{equation*}
We consider the points of $V_n(K)$ as vertices of a graph in which adjacency, denoted by $p\sim_n q$, means that there is a word $w$ of length $n$ such that $p,q \in V_w(K)$.
\end{definition}

On each of these graphs we  define a graph energy for  $u : V_{\ast}(K) \to \real$ by
\begin{equation}\label{eqn:bdygraphenergy}
\energy_K^{(n)}(u) = \frac{4^n}{2} \sum_{p \in V_n(K)} \sum_{q \underset{n}{\sim} p} \left(u(q)-u(p)\right)^2. 
\end{equation}
Following the general treatment in~\cite{MR1840042}, to which we refer for all omitted details, we see that  $\{\energy_K^{(n)}(u) \}$ is  nondecreasing, so $\energy(u) = \lim_{m \to \infty} \energy_m(u)$ is well-defined; setting its domain to be $\{ u:V_{\ast}(K) \to \real \ | \ \energy_K(u)< \infty \}$ one obtains a resistance form.  This non-negative definite, symmetric quadratic form extends to
$\domain{(\energy_K)} := \{u \in  \continuous(K)  \ | \ \energy_K(u|_{V_{\ast}})< \infty \}$,
where $\continuous(K)$ is the space of continuous functions on $K$.   There is a resistance metric $R(x,y)$ on $K$ defined from $\energy_K$ and with the property that points $x,y\in V_n$ with $x\sim_n y$ have $R(x,y)$ comparable to $4^{-n}$, and thus $R(x,y)$ is bi-H\"older to the Euclidean metric on $K$ with $R(x,y)\asymp|x-y|^{\frac{\log4}{\log3}}$.  There is a resistance estimate for  $f\in\domain{(\energy_K)}$
\begin{equation}\label{eq:resistbd}
	|f(x)-f(y)|^2\leq R(x,y)\energy_K(f)
\end{equation}
so in particular these functions are $\frac{\log2}{\log 3}$-H\"older in the Euclidean metric, see also~\cite[Corollary~4.8]{MR2066098}.

Continuing our use of results from~\cite{MR1840042}, we see that if equip the Koch curve with the standard Bernoulli probability measure $\mu_K$, i.e. the self-similar measure with
weights $\{\mu_i \}^{4}_{i=1}$ and $\mu_i=\frac{1}{4}$ for $i \in \left\{1,2,3,4\right\} $ then $(\domain{(\energy_K)},\energy_K)$ gives a strongly local regular Dirichlet form on $L^2(K,\mu_K)$.

A particular collection of functions in $\domain (\energy_K)$ of $\energy_K$ will be useful in what follows.  A function $h$ on $K$ is called harmonic if it minimizes the graph energies $\energy_K^{(n)}(h)$ for all $n \geq 1$.  It is called piecewise harmonic at scale $n$ if it is harmonic on the complement of $V_n$, or equivalently if $h\circ F_w^{-1}$ is harmonic for each word $w$ of length $n$.  Piecewise harmonic functions are uniform-norm dense in $\continuous(K)$ and dense in $\domain(\energy_K)$ with respect to the norm $\bigl( \|u\|_{L^2(K,\mu_K)}^2+\energy_{K}(u)\bigr)^{1/2}$.

As in~\cite{MR2066098}, we transfer the above definitions for the Koch curve to the boundary of the snowflake domain $\overline{\Omega}$ in a the obvious manner.  Write the boundary $\partial \Omega$  as a union $\cup_j K_j$ of three congruent copies of $K$.  Specifically, let $K_j=\varphi_j(K)$ where $\varphi_j$ is the Euclidean translation and rotation such that $0\mapsto \sqrt{3}e^{i(4j-3)\pi/6}$ and $1\mapsto \sqrt{3}e^{i(4j+1)\pi/6}$.  %Corresponding maps rotated by $-\frac{i\pi}6$ will be useful in Definition~\ref{def:intrinsicallyC1} below; accordingly, for $j=4,5,6$ we let $\varphi_j$ be the Euclidean translation and rotation such that $0\mapsto \sqrt{3}e^{i(4j-1)\pi/6}$ and $1\mapsto \sqrt{3}e^{i(4j+3)\pi/6}$.
\begin{definition}
The boundary energy and its domain are defined by:
\begin{align*}
\domain{(\energy_{\partial \Omega})} &:= \left\{ u:\partial \Omega \to \real \ | \ u|_{K_i}\circ \varphi_i^{-1} \in \domain(\energy_{K}), i=1,\ldots,3  \right\},\\
\energy_{\partial \Omega}(u) &:= \energy_{K}(u|_{K_1}\circ \varphi_1^{-1})+\energy_{K}(u|_{K_2}\circ \varphi_2^{-1})+\energy_{K}(u|_{K_3}\circ \varphi_3^{-1}) \text{ if }u \in \domain(\energy_{\partial \Omega}).
\end{align*}
\end{definition}
We then let
\begin{equation}
\label{eq:measureBoundary}
\mu(\cdot):=\mu_{K}(\varphi_1^{-1}( \cdot))+\mu_{K}(\varphi_2^{-1}( \cdot))+\mu_{K}(\varphi_3^{-1}( \cdot))
\end{equation}
so that  $(\energy_{\partial \Omega},\domain{(\energy_{\partial \Omega})})$ is a strongly
local Dirichlet form on $L^2(\partial \Omega,\mu)$. Since $\mu_K(K)=1$ we have $\mu(\partial \Omega)=3$.

\section{Dirichlet form on the snowflake domain}\label{sec:domainenergy}

We wish to consider a Dirichlet form on $\overline\Omega$ that incorporates our form $\energy_{\partial\Omega}$ as well as the classical Dirichlet energy on $\Omega$. The latter is, of course, simply $\int_\Omega |\nabla f|^2\,d\mathcal{L}^2$, where $ \mathcal{L}^2$ is Lebesgue measure. The domain is the Sobolev space $H^1$.  The fact that a nice Dirichlet form of this type exists depends on results of Wallin~\cite{Wallin} and Lancia~\cite{MR1916407}, which we briefly summarize.  %We then extract some additional properties from a construction in~\cite{MR3702726}.

The trace of $H^1(\Omega)$ to $\partial\Omega$ is well defined and can be identified with a Besov space $B$, the details of which will not be needed here~\cite{Wallin}. Moreover, the kernel of the trace map is $H^1_0(\Omega)$, the $H^1$-closure of $C^\infty$ functions with compact support in $\Omega$.   The domain $\domain{(\energy_{\partial \Omega})}$ can be identified with a closed subspace of $B$, and there is a bounded linear extension operator from $\domain{(\energy_{\partial \Omega})}$ to $H^1(\Omega)$, see~\cite{MR1916407}.  Writing $u|_{\partial\Omega}$ for the trace of $u$ we see that the following defines a Hilbert space and inner product, see~\cite[Proposition 3.2]{MR3250804}.
\begin{align*}
	W(\Omega,\partial\Omega) &= \{u\in H^1(\Omega): u|_{\partial\Omega}\in \domain{(\energy_{\partial \Omega})}\}\\
\langle u,v \rangle_{V( \Omega, \partial \Omega)} &= \langle u,v \rangle_{H^1(\Omega)}+\energy_{\partial \Omega}(u|_{\partial \Omega}, v|_{\partial \Omega})+\langle u|_{\partial \Omega}, v|_{\partial \Omega}\rangle_{L^2(\partial \Omega,\mu)}. 
	\end{align*}
	
One consequence of the preceding is that if we let $m= \mathcal{L}^2 |_{\Omega} + \mu|_{\partial \Omega}$ where $\mathcal{L}^2$ is Lebesgue measure on $\Omega$ and $\mu$ is from \eqref{eq:measureBoundary} then for any $c_0 > 0$ the quadratic form 
\begin{equation}\label{e-key}
\energy (u) := \int_{\Omega} (\nabla u)^2  d\mathcal{L}^2 +c_0 \energy_{\partial \Omega}(u|_{\partial \Omega}).
\end{equation}
with domain $W( \Omega, \partial \Omega))$ in $L^2( \overline{\Omega},m)$ is a Dirichlet form.

Another consequence that will be significant in the next section is that $W( \Omega, \partial \Omega))$ may be written as the sum of $H^1_0(\Omega)$ and the subspace of $H^1(\Omega)$ obtained by the extension operator from $\domain{(\energy_{\partial \Omega})}$.  In this context it is useful to describe an explicit extension operator given in~\cite{MR3702726} as the ``second proof'' of their Theorem~6.1, and to extract some further features of the extension and their consequences. We refer to~\cite{MR3702726} for a more detailed exposition of the construction, including diagrams of the hexagons and the triangulation.

The extension operator for a function $f$ on $\partial\Omega$ is defined as follows.  
There is an induction over $n=0,1,2,\dotsc$ that defines an exhaustion of $\Omega$ by regular hexagons of sidelength $3^{-n}$ which meet only on edges.  As the induction proceeds, each new hexagon is subdivided into $6$ equilateral triangles meeting at the center and a piecewise linear function is defined on each triangle by linear interpolation of the values at the vertices.  The value at the center of the hexagon is defined to be the average of its vertices, and the induction is such that the hexagon vertices either lie on edges of the triangles from the previous stage of the construction, which provides the values at these points, or lie on $\partial\Omega$. In the latter case the values of $f$ are used at these vertices, and we note that those vertices of a hexagon with sidelength $3^{-n}$ that come from $\partial\Omega$ are points from the vertex set $V_{n+1}$, see Definition~\ref{def:Vn}.  The resulting piecewise linear function is the extension of $f$ to $\Omega$, which we call $g$. It is a special case of the result in~\cite[Theorem~6.1]{MR3702726} that if $f$ is a piecewise harmonic function then $g$ is Lipschitz in the Euclidean metric on $\overline\Omega$.

With regard to the following theorem, we note that the existence of a bounded linear extension in this setting is known~\cite{MR1916407}, and we could develop the corollaries from this, but it will be useful for us to know the result for this specific extension.

\begin{theorem}\label{thm:ext}
The extension operator described above is a bounded linear extension operator from $\domain{(\energy_{\partial \Omega})}$ with norm $(\|\cdot\|_{L^2(\partial\Omega,\mu)}^2+\energy_{\partial\Omega}(f))^{1/2}$ to $H^1(\Omega)$ with its usual norm, and satisfies the seminorm bound
\begin{equation}
\label{eq:seminormbdforextension}
\int_\Omega |\nabla g|^2\leq C'\sum_n\sum_{x\underset{n}{\sim}y} |f(x)-f(y)|^2\leq C\energy_{\partial\Omega} (f).
\end{equation}
\end{theorem}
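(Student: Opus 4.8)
The plan is to estimate the Dirichlet energy of $g$ on each hexagon of the construction separately and sum the contributions. Fix a hexagon $H$ of sidelength $3^{-n}$ appearing at stage $n$ of the induction. On each of its six equilateral triangles $g$ is affine, so $|\nabla g|$ is constant there and equals (up to a dimensional constant) the maximum difference of the vertex values divided by $3^{-n}$; integrating over the triangle, whose area is $\asymp 9^{-n}$, contributes $\asymp (\text{difference of vertex values})^2$ to $\int_H|\nabla g|^2$. The center value is the average of the six vertex values, so differences involving the center are controlled by differences between the vertices themselves. Hence $\int_H|\nabla g|^2\leq C\sum (g(x)-g(y))^2$, the sum over pairs $x,y$ of adjacent vertices of $H$ (including center–vertex and vertex–vertex pairs of the hexagonal subdivision). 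Summing over all hexagons at all scales gives $\int_\Omega|\nabla g|^2\leq C\sum_n\sum_{x\underset{n}{\sim}y}(g(x)-g(y))^2$, which is the left inequality in \eqref{eq:seminormbdforextension} once we check the bookkeeping: every hexagon vertex is either a point of some $V_{n+1}$ on $\partial\Omega$, on which $g=f$, or an interior point lying on an edge of a triangle from the previous stage, on which $g$ is obtained by linear interpolation. The key combinatorial point is that each such interior vertex value is a convex combination of values at $\partial\Omega$-vertices from earlier stages, so telescoping the interpolation expresses $g(x)-g(y)$ as a sum of finitely many differences $f(p)-f(q)$ with $p\sim_m q$ for $m\leq n+1$, and the number of hexagons meeting a given edge is bounded, so each boundary difference $|f(p)-f(q)|^2$ is reused only boundedly many times.

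For the right inequality, I would compare the ``double sum over all scales'' $\sum_m\sum_{x\underset{m}{\sim}y}|f(x)-f(y)|^2$ to $\energy_{\partial\Omega}(f)=\lim_m \energy^{(m)}_{\partial\Omega}(f)$. Recall from \eqref{eqn:bdygraphenergy} that $\energy_K^{(m)}(f)\asymp 4^m\sum_{x\underset{m}{\sim}y}|f(x)-f(y)|^2$, and on the snowflake boundary $\energy_{\partial\Omega}$ is a finite sum of three such Koch-curve energies, so $\sum_{x\underset{m}{\sim}y}|f(x)-f(y)|^2\asymp 4^{-m}\energy^{(m)}_{\partial\Omega}(f)\leq 4^{-m}\energy_{\partial\Omega}(f)$ using monotonicity of $\energy^{(m)}_{\partial\Omega}$. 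Summing the geometric series in $m$ gives $\sum_m\sum_{x\underset{m}{\sim}y}|f(x)-f(y)|^2\leq C\,\energy_{\partial\Omega}(f)$, which is exactly the second inequality. (Strictly, the first sum in \eqref{eq:seminormbdforextension} is indexed by the hexagon scale $n$ rather than the graph scale $m$; since these are offset by a bounded amount — the hexagon-$n$ vertices sit in $V_{n+1}$ — the same geometric-series argument applies after reindexing.)

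Finally, to upgrade the seminorm bound to the full statement that the operator is bounded from $\domain{(\energy_{\partial\Omega})}$ with the norm $(\|\cdot\|_{L^2(\partial\Omega,\mu)}^2+\energy_{\partial\Omega}(f))^{1/2}$ into $H^1(\Omega)$ with its usual norm, it remains to bound $\|g\|_{L^2(\Omega)}$. Here I would use that $g$ is built by interpolation, so $\sup_\Omega|g|\leq \sup_{\partial\Omega}|f|$ by the maximum principle for the averaging/interpolation scheme, and then $\sup_{\partial\Omega}|f|$ is controlled by $(\|f\|_{L^2(\partial\Omega,\mu)}^2+\energy_{\partial\Omega}(f))^{1/2}$ via the resistance estimate \eqref{eq:resistbd} together with the fact that $R$-balls have positive $\mu$-measure (a Poincaré-type bound: oscillation of $f$ is controlled by $\energy_{\partial\Omega}(f)^{1/2}$ and its average is controlled by the $L^2$ norm). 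Since $\mathcal{L}^2(\Omega)<\infty$, this gives $\|g\|_{L^2(\Omega)}\leq C(\|f\|_{L^2(\partial\Omega,\mu)}^2+\energy_{\partial\Omega}(f))^{1/2}$, and combining with the gradient bound finishes the proof. Linearity is immediate from the construction, since linear interpolation and averaging are linear operations.

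The main obstacle I anticipate is the bookkeeping in the first inequality: making precise that telescoping the linear-interpolation definition of interior hexagon-vertex values back to boundary values produces a controlled number of boundary-edge differences, each counted with bounded multiplicity across all scales and all hexagons. This is where the geometric structure of the hexagonal exhaustion — in particular that hexagons of a given scale have bounded overlap and that each new boundary vertex at scale $n$ is a neighbor at graph-scale $n+1$ — has to be used carefully; the energy estimates on individual triangles and the geometric-series summation are routine by comparison.
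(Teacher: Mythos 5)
Your proposal is correct and follows essentially the same route as the paper: the triangle-wise identity $\int_T|\nabla g|^2\asymp\sum(\text{vertex differences})^2$ (the paper's Lemma~\ref{lem:energyisdiffssquared}), the observation that hexagon vertex values telescope back to boundedly many neighbor differences of $f$ on $\partial\Omega$, and the $L^\infty$ maximum principle combined with the resistance estimate~\eqref{eq:resistbd} for the $L^2$ bound. The only addition is that you spell out the geometric-series argument for the second inequality in~\eqref{eq:seminormbdforextension}, which the paper leaves implicit; the bookkeeping point you flag as the main obstacle is resolved in the paper by noting that the interpolation chain reaches back at most two scales, so the telescoping involves a uniformly bounded number of terms.
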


We need the following elementary lemma.
\begin{lemma}\label{lem:energyisdiffssquared}
If $h$ is a linear function on an equilateral triangle $T\subset\mathbb{R}^2$ then $\int_T |\nabla h|^2$ over the triangle is a constant multiple of the sum of the squared differences between vertices, independent of the size of the triangle. Thus $\int |\nabla g|^2$ over a hexagon in the extension construction is bounded by a constant multiple of the squared vertex differences summed over all pairs of vertices of the hexagon.
\end{lemma}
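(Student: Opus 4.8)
The plan is to exploit that a linear function has constant gradient, so $\int_T|\nabla h|^2=|\nabla h|^2\,\mathrm{Area}(T)$, and then to express $|\nabla h|^2$ in terms of the vertex differences using the geometry of the equilateral triangle.

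First I would fix an equilateral triangle $T$ with vertices $v_1,v_2,v_3$ and side length $a$, and set $e_{jk}=v_j-v_k$. Since $h$ is affine, $h(v_j)-h(v_k)=\langle\nabla h,e_{jk}\rangle$. The three vectors $e_{12},e_{23},e_{31}$ all have length $a$ and sum to zero, hence point in directions at mutual angles of $120^\circ$. The key elementary fact is that three unit vectors $u_1,u_2,u_3$ in $\mathbb{R}^2$ at mutual angles $120^\circ$ form a tight frame, $\sum_k u_ku_k^{\mathsf T}=\tfrac32 I$, so that $\sum_k\langle w,u_k\rangle^2=\tfrac32|w|^2$ for every $w\in\mathbb{R}^2$ (equivalently, one may just compute $|\nabla h|^2$ in coordinates from the vertex values). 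Applying this with $w=\nabla h$ gives
\[
\sum_{j<k}\bigl(h(v_j)-h(v_k)\bigr)^2=a^2\sum_k\langle\nabla h,u_k\rangle^2=\tfrac32 a^2|\nabla h|^2 .
\]
Combining with $\mathrm{Area}(T)=\tfrac{\sqrt3}{4}a^2$ yields $\int_T|\nabla h|^2=\tfrac{\sqrt3}{4}a^2|\nabla h|^2=\tfrac{\sqrt3}{6}\sum_{j<k}\bigl(h(v_j)-h(v_k)\bigr)^2$; the factor $a^2$ cancels, which is precisely the asserted independence of scale.

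For the hexagon statement, I would note that on each of its six equilateral sub-triangles the extension $g$ is linear, so $\int_{\mathrm{hex}}|\nabla g|^2=\sum_{\ell=1}^6\int_{T_\ell}|\nabla g|^2$, and by the first part this equals a fixed constant times a sum of squared differences among the seven points consisting of the hexagon vertices $p_1,\dots,p_6$ and the center $c$. Since by construction $g(c)=\tfrac16\sum_i g(p_i)$, Cauchy--Schwarz gives $\bigl(g(c)-g(p_i)\bigr)^2=\bigl(\tfrac16\sum_j(g(p_j)-g(p_i))\bigr)^2\le\tfrac16\sum_j\bigl(g(p_j)-g(p_i)\bigr)^2$, so every term involving $c$ is dominated by a multiple of $\sum_{i<j}\bigl(g(p_i)-g(p_j)\bigr)^2$; the remaining terms are already of this form. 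Hence $\int_{\mathrm{hex}}|\nabla g|^2\le C\sum_{i<j}\bigl(g(p_i)-g(p_j)\bigr)^2$ with an absolute constant $C$, as claimed.

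The argument is entirely elementary; the only points needing any care are the tight-frame identity for the three edge directions (or the equivalent coordinate computation), the verification that the power of $a$ cancels so the constant is scale-free, and the bookkeeping of the center-to-vertex differences in the hexagon via the averaging property. I do not expect any genuine obstacle here.
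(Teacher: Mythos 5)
Your argument is correct and follows essentially the same route as the paper's proof: a direct computation on a single equilateral triangle (which the paper leaves to the reader and you carry out via the tight-frame identity for the three edge directions), the observation that the length-scaling of $|\nabla h|^2$ cancels against that of the area, and, for the hexagon, summation over the six sub-triangles together with Cauchy--Schwarz. Your explicit use of the averaging property $g(c)=\tfrac16\sum_i g(p_i)$ to eliminate the center-to-vertex terms is a detail the paper's proof leaves implicit, and it is indeed needed for the stated bound.
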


\begin{proof}
The first statement follows from a direct computation for a triangle of sidelength $1$ that is left to the reader, combined with the observation that when rescaling length the scaling of $|\nabla h|^2$ cancels with that for the area of the triangle. For the second, write the integral as a sum over the triangles, use the first statement on each triangle and apply the Cauchy-Schwarz inequality.
\end{proof}

\begin{proof}[\protect{Proof of Theorem~\ref{thm:ext}}]
We begin by observing that all operations in the definition of the extension are linear in $f$, and consequently so is the extension operator. In the following argument $C$ is a constant that may change value from step to step, even within an inequality.

Observe that in the construction, the vertex values of a hexagon of sidelength $3^{-n}$ come either from values at $f$ at points of $V_n$, or from linear interpolation of values of $f$ at vertices from $V_{n-1}$ on the side of a hexagon of the previous scale, or from linear interpolation between a value at a point in $V_{n-1}$ and a value obtained as an interpolant between values at vertices from $V_{n-2}$. The observation that drives the  Lipschitz bound in~\cite{MR3702726} is that in any of these cases we can bound the  pairwise difference between values at vertices by $C\sum |f(x)-f(y)|$ where the sum ranges over neighbor pairs in $V_n$ that are within a bounded distance from the hexagon.  It follows that the number of terms in the sum has  a uniform bound, and thus the squared pairwise differences are bounded by $C\sum |f(x)-f(y)|^2$.
In view of Lemma~\ref{lem:energyisdiffssquared}, the left side of~\eqref{eq:seminormbdforextension} is bounded by a constant multiple of the sum of squares over edges in the triangulation.  The preceding argument gives a bound for the sum over the edges that are in a hexagon of size $3^{-n}$, and summing over the hexagons yields~\eqref{eq:seminormbdforextension}.

For the norm bound we must control $\|g\|_{L^2(\Omega)}$.  It is immediate from the construction that $\|g\|_{L^\infty(\Omega)}\leq\|f\|_{L^\infty(\partial\Omega)}$. However the latter can be bounded in a standard manner from the resistance bound~\eqref{eq:resistbd}, because it implies that $|f(x)|\geq\frac12\|f\|_\infty$ on an interval of size controlled by $\energy_{\partial \Omega}$. Direct computation then gives $\|f\|_\infty^2\leq 2(\|f\|_{L^2(\partial\Omega)}^2+\energy_{\partial\Omega}(f))$. Since $\|g\|_{L^2(\Omega)}\leq C\|g\|_{L^\infty(\Omega)}$ we obtain $\|g\|_{L^2(\Omega)}\leq C \|f\|_{L^2(\partial\Omega)}$.  Together with the seminorm bound~\eqref{eq:seminormbdforextension} this proves the extension operator is bounded.
\end{proof}

\begin{corollary}\label{cor:domainofform}
The domain of the Dirichlet form $\energy (u) := \int_{\Omega} (\nabla u)^2  d\mathcal{L}^2 +c_0 \energy_{\partial \Omega}(u|_{\partial \Omega})$ may be written as a sum of compact subspaces of $L^2(\Omega)$
\begin{equation*}
W( \Omega, \partial \Omega) = H^1_0(\Omega)+ H^1_{\energy(\partial\Omega)}(\Omega)
\end{equation*}
where $H^1_{\energy(\partial\Omega)}(\Omega)$ is the image of $\domain(\energy_{\partial\Omega})$ under the extension map.
\end{corollary}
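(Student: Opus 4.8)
The plan is to prove the corollary in two steps: first the linear identity $W(\Omega,\partial\Omega)=H^1_0(\Omega)+H^1_{\energy(\partial\Omega)}(\Omega)$, and then the compactness of each summand in $L^2(\Omega)$. Write $E$ for the extension operator of Theorem~\ref{thm:ext}, so that $H^1_{\energy(\partial\Omega)}(\Omega)=E\bigl(\domain(\energy_{\partial\Omega})\bigr)$. The inclusion $H^1_0(\Omega)+H^1_{\energy(\partial\Omega)}(\Omega)\subseteq W(\Omega,\partial\Omega)$ is immediate, since members of $H^1_0(\Omega)$ have zero trace (hence trace in $\domain(\energy_{\partial\Omega})$) and, by Theorem~\ref{thm:ext}, members of $H^1_{\energy(\partial\Omega)}(\Omega)$ lie in $H^1(\Omega)$ with trace in $\domain(\energy_{\partial\Omega})$. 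For the reverse inclusion, given $u\in W(\Omega,\partial\Omega)$ I would set $g=E(u|_{\partial\Omega})$ and show $u-g\in H^1_0(\Omega)$. By Wallin's trace theorem~\cite{Wallin} the kernel of the trace operator on $H^1(\Omega)$ is exactly $H^1_0(\Omega)$, so it suffices to check that $E$ is a genuine extension, i.e. $E(f)|_{\partial\Omega}=f$ for all $f\in\domain(\energy_{\partial\Omega})$. This holds when $f$ is piecewise harmonic, since then $E(f)$ is the Lipschitz interpolant whose boundary vertex values are the values of $f$ on $V_\ast$. In general, $f\mapsto E(f)|_{\partial\Omega}$ and $f\mapsto f$ are both bounded linear maps from $\domain(\energy_{\partial\Omega})$ into the Besov trace space $B$ — using the boundedness of $E$ into $H^1(\Omega)$ (Theorem~\ref{thm:ext}), the boundedness of the trace map $H^1(\Omega)\to B$, and the continuous embedding of $\domain(\energy_{\partial\Omega})$ into $B$ — and they agree on the piecewise harmonic functions, which are dense in $\domain(\energy_{\partial\Omega})$; hence they agree everywhere. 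Thus $u=(u-g)+g$ with $u-g\in H^1_0(\Omega)$, which proves the identity. (The sum is in fact direct: if $g\in H^1_0(\Omega)\cap H^1_{\energy(\partial\Omega)}(\Omega)$ then $g|_{\partial\Omega}=0$, and since $g=E(f)$ with $f=g|_{\partial\Omega}=0$ we get $g=0$; this refinement is not needed below.)

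For the compactness, I would recall that $\Omega$ is a bounded Sobolev extension domain, so there is a bounded linear operator from $H^1(\Omega)$ into $H^1$ of a fixed ball containing $\overline\Omega$; combined with the Rellich--Kondrachov theorem this shows the inclusion $H^1(\Omega)\hookrightarrow L^2(\Omega)$ is compact. Since $H^1_0(\Omega)$ is a closed subspace of $H^1(\Omega)$, its inclusion into $L^2(\Omega)$ is compact. For the other summand, equip $H^1_{\energy(\partial\Omega)}(\Omega)$ with the norm carried over from $\domain(\energy_{\partial\Omega})$ by $E$ (legitimate as $E$ is injective, being an extension); then the inclusion $H^1_{\energy(\partial\Omega)}(\Omega)\hookrightarrow L^2(\Omega)$ factors as $\domain(\energy_{\partial\Omega})\xrightarrow{E}H^1(\Omega)\hookrightarrow L^2(\Omega)$, a bounded operator followed by a compact one, and is therefore compact.

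The main obstacle is the identity $E(f)|_{\partial\Omega}=f$. Because $E$ is defined only through the combinatorial hexagon-and-triangle construction, one must argue rather than merely observe that its trace recovers $f$, and the natural route is the density of piecewise harmonic functions together with the continuity estimates from Theorem~\ref{thm:ext}. A secondary point requiring some care is the compact Sobolev embedding on the non-Lipschitz domain $\Omega$, which is not the classical Rellich--Kondrachov statement but follows from the extension property of the snowflake domain.
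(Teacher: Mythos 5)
Your argument is correct, and for the decomposition itself and for the compactness of $H^1_0(\Omega)$ it follows essentially the paper's route: Wallin's identification of the kernel of the trace map with $H^1_0(\Omega)$, the extension operator of Theorem~\ref{thm:ext}, and Rellich--Kondrachov. Your verification that $E(f)|_{\partial\Omega}=f$, by checking it on piecewise harmonic functions and passing to the limit using the boundedness of $E$ into $H^1(\Omega)$ and of the trace into the Besov space $B$, is a detail the paper leaves implicit, and it is worth spelling out since $E$ is defined only through the hexagon construction. The genuine divergence is in the compactness of $H^1_{\energy(\partial\Omega)}(\Omega)$. The paper exhibits its embedding into $L^2(\Omega)$ as a limit of finite-rank operators, using the density of the finite-dimensional spaces of scale-$n$ piecewise harmonic functions in $\domain(\energy_{\partial\Omega})$ together with Theorem~\ref{thm:ext}; this is self-contained and relies only on the resistance-form structure of the boundary. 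You instead factor the embedding as the bounded map $E:\domain(\energy_{\partial\Omega})\to H^1(\Omega)$ followed by the embedding $H^1(\Omega)\hookrightarrow L^2(\Omega)$, whose compactness you must justify by importing the nontrivial fact that the snowflake domain is a $W^{1,2}$-extension domain (true, since it is an $(\varepsilon,\delta)$-domain and Jones' extension theorem applies, but the paper never needs this; for $H^1_0(\Omega)$ alone one simply extends by zero, so your detour through the extension property is unnecessary for that summand). Both arguments are valid: yours is shorter and generalizes to any pair of trace and extension operators on an extension domain, while the paper's avoids any appeal to the geometry of $\Omega$ beyond the explicit construction and additionally shows the embedding is approximable in norm by finite-rank maps.
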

\begin{proof}
The decomposition as a sum of closed subspaces follows from Wallin's result~\cite{Wallin} that the kernel of the trace map is $H^1_0(\Omega)$, and from Lancia's bounded linear extension~\cite{MR1916407} (or from Theorem~\ref{thm:ext} above).  Compactness of $H^1_0(\Omega)$ is from the classical Rellich–Kondrachov theorem.  Compactness of $H^1_{\energy(\partial\Omega)}(\Omega)$ is an immediate consequence of the density of the finite dimensional space of harmonic functions in  $\domain(\energy_{\partial\Omega})$  and the boundedness of the extension map in Theorem~\ref{thm:ext} because it exhibits $H^1_{\energy(\partial\Omega)}(\Omega)$ as the completion of a sequence of finite dimensional spaces in $H^1(\Omega)$.
\end{proof}

The following consequence is standard.
\begin{corollary}\label{cor:efnsexist}
The non-negative self-adjoint Laplacian $L$ associated to the Dirichlet form by $\energy(u,v)=-\int_{\overline{\Omega}} (Lu)v\,dm $ has compact resolvent and thus its spectrum is a sequence of non-negative eigenvalues accumulating only at $\infty$.
\end{corollary}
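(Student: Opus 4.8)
The statement is of the kind that becomes routine once one knows that the form domain is compactly embedded in the reference Hilbert space, so the plan is to establish that compact embedding and then feed it into the abstract machinery. First I would recall that $(\energy,W(\Omega,\partial\Omega))$ is a closed, densely defined, non-negative symmetric form on $L^2(\overline\Omega,m)$ — this is precisely what was set up before Corollary~\ref{cor:domainofform}, together with the completeness of $W(\Omega,\partial\Omega)$ noted there — so by the standard correspondence between such forms and operators there is a unique non-negative self-adjoint operator $L$ with form domain $W(\Omega,\partial\Omega)$ and $\energy(u,v)=-\langle Lu,v\rangle_{L^2(\overline\Omega,m)}$ for $u$ in its operator domain and $v\in W(\Omega,\partial\Omega)$; non-negativity of the spectrum is then immediate from $\energy\geq 0$.

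The heart of the matter is to show that $(L+I)^{-1}$ is compact on $L^2(\overline\Omega,m)$, and for this I would invoke the standard equivalence that $(L+I)^{-1}$ is compact if and only if the inclusion of $W(\Omega,\partial\Omega)$, equipped with the form norm $\|u\|_\energy^2:=\energy(u)+\|u\|_{L^2(\overline\Omega,m)}^2$, into $L^2(\overline\Omega,m)$ is compact. So let $(u_k)$ be a sequence in $W(\Omega,\partial\Omega)$ with $\|u_k\|_\energy\leq 1$; one must extract a subsequence converging in $L^2(\overline\Omega,m)=L^2(\Omega,\mathcal{L}^2)\oplus L^2(\partial\Omega,\mu)$, i.e.\ converging both in $L^2(\Omega)$ and, for the traces, in $L^2(\partial\Omega,\mu)$. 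From the definition of $\energy$ in~\eqref{e-key}, the bound $\|u_k\|_\energy\leq 1$ gives simultaneously a bound on $\|u_k\|_{H^1(\Omega)}$ and a bound on $\|u_k|_{\partial\Omega}\|_{L^2(\partial\Omega,\mu)}^2+c_0\,\energy_{\partial\Omega}(u_k|_{\partial\Omega})$. The first bound yields, via the Rellich--Kondrachov theorem, a subsequence with $u_k\to u$ in $L^2(\Omega)$ (equivalently, this is the $L^2(\Omega)$-compactness of $H^1_0(\Omega)$ and of $H^1_{\energy(\partial\Omega)}(\Omega)$ recorded in Corollary~\ref{cor:domainofform}). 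For the traces $f_k:=u_k|_{\partial\Omega}$, which are bounded in $\domain(\energy_{\partial\Omega})$ with the norm $(\|\cdot\|_{L^2(\partial\Omega,\mu)}^2+\energy_{\partial\Omega}(\cdot))^{1/2}$, I would use the resistance estimate~\eqref{eq:resistbd} transported to $\partial\Omega$ together with the sup-norm bound $\|f\|_\infty^2\leq 2(\|f\|_{L^2(\partial\Omega,\mu)}^2+\energy_{\partial\Omega}(f))$ established inside the proof of Theorem~\ref{thm:ext}: these make $(f_k)$ uniformly bounded and uniformly $\tfrac{\log2}{\log3}$-Hölder on the compact set $\partial\Omega$, so Arzelà--Ascoli furnishes a further subsequence converging uniformly, hence in $L^2(\partial\Omega,\mu)$ since $\mu(\partial\Omega)=3<\infty$. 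Combining, $\|u_k-u_\ell\|_{L^2(\overline\Omega,m)}^2=\|u_k-u_\ell\|_{L^2(\Omega)}^2+\|f_k-f_\ell\|_{L^2(\partial\Omega,\mu)}^2\to 0$, so the selected subsequence converges in $L^2(\overline\Omega,m)$ and the embedding is compact.

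With the compact embedding in hand the conclusion is automatic: $(L+I)^{-1}$ is compact, self-adjoint and injective, so by the spectral theorem for compact self-adjoint operators its spectrum is a sequence of eigenvalues of finite multiplicity accumulating only at $0$; inverting, the spectrum of $L$ is a sequence of eigenvalues of finite multiplicity accumulating only at $+\infty$, and they are non-negative since $\energy\geq 0$. The only point demanding care beyond citing standard theorems is the boundary contribution: because the reference measure $m$ genuinely charges $\partial\Omega$, compactness of the embedding into $L^2(\Omega)$ does not by itself suffice, and one must separately argue that form-bounded sequences have boundary traces precompact in $L^2(\partial\Omega,\mu)$ — this is exactly where the resistance/Hölder estimate for the Kusuoka--Kigami form and the finiteness of $\mu$ are used. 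Equivalently, one may isolate this step as the assertion that $\domain(\energy_{\partial\Omega})\hookrightarrow L^2(\partial\Omega,\mu)$ is compact, which is the familiar compactness of the form domain of a resistance form on a compact space carrying a finite measure.
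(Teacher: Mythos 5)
Your proposal is correct, and it is substantially more complete than what the paper actually records: the paper states only that ``the following consequence is standard,'' implicitly resting on Corollary~\ref{cor:domainofform}, which asserts that $H^1_0(\Omega)$ and $H^1_{\energy(\partial\Omega)}(\Omega)$ are compact subspaces of $L^2(\Omega)$. As you rightly emphasize at the end of your argument, that alone does not finish the job, because the reference space is $L^2(\overline\Omega,m)$ with $m=\mathcal{L}^2|_\Omega+\mu|_{\partial\Omega}$, and $m$ genuinely charges the boundary; compactness of the embedding therefore also requires that form-bounded sequences have traces precompact in $L^2(\partial\Omega,\mu)$. Your handling of that step --- the resistance estimate \eqref{eq:resistbd} giving a uniform H\"older modulus, the sup-norm bound extracted from the proof of Theorem~\ref{thm:ext}, and Arzel\`a--Ascoli on the compact set $\partial\Omega$ with $\mu(\partial\Omega)<\infty$ --- is exactly the ingredient the paper leaves unsaid, and it is the standard compactness mechanism for resistance forms on compact spaces with finite measure. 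The one point where your route demands a little extra care is the interior part: invoking Rellich--Kondrachov for the full space $H^1(\Omega)$ on the snowflake domain requires knowing that $\Omega$ is a Sobolev extension domain (true, since it is a quasidisk, but it is not Lipschitz, so the ``classical'' theorem does not apply verbatim); the paper's decomposition deliberately sidesteps this by writing the form domain as $H^1_0(\Omega)$ (where compactness holds for any bounded open set) plus the image of the explicit extension operator (compact because the finite-dimensional spaces of piecewise harmonic boundary data are dense and the extension map is bounded). You in fact note this alternative parenthetically, so either route closes the argument; yours is more direct and self-contained about the boundary, the paper's is more economical about the interior.
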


We also note that the extension construction gives us an explicit H\"older estimate.
\begin{corollary}
Functions in $H^1_{\energy(\partial\Omega)}$ are $\frac{\log2}{\log3}$-H\"older in the Euclidean metric on $\overline\Omega$.
\end{corollary}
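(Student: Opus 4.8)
The plan is to unwind the definition of $H^1_{\energy(\partial\Omega)}$: any $u$ in this space is $u=g$, the hexagonal extension constructed above of some $f=u|_{\partial\Omega}\in\domain(\energy_{\partial\Omega})$. Writing $\alpha=\tfrac{\log2}{\log3}$ and $A=\bigl(\|f\|_{L^2(\partial\Omega,\mu)}^2+\energy_{\partial\Omega}(f)\bigr)^{1/2}$, I would prove the quantitative statement $|g(x)-g(y)|\le C\,|x-y|^{\alpha}A$ for all $x,y\in\overline\Omega$. The first ingredient is that the boundary trace $f$ is already $\alpha$-H\"older in the Euclidean metric on $\partial\Omega$: this is immediate from the resistance estimate \eqref{eq:resistbd} and the comparison $R(x,y)\asymp|x-y|^{\log4/\log3}=|x-y|^{2\alpha}$, which give $|f(x)-f(y)|^2\le R(x,y)\energy_{\partial\Omega}(f)\le C|x-y|^{2\alpha}A^2$.

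The second ingredient is local control of $g$ on the hexagons of the exhaustion, and here I would re-use the estimates already extracted in the proof of Theorem~\ref{thm:ext}. There it is shown that the pairwise differences of the vertex values of a hexagon $H$ of sidelength $3^{-n}$ are bounded by $C\sum|f(p)-f(q)|$ over a uniformly bounded collection of edges $p\sim_{n} q$ (or $p\sim_{n+1}q$) near $H$; since such $p,q$ have $|p-q|\asymp 3^{-n}$, the first ingredient bounds each term by $C\,3^{-n\alpha}A$. Because $g$ is linear on each of the six subtriangles of $H$ with center value a convex combination of the outer vertex values, $\operatorname{osc}_H g$ is at most the largest pairwise vertex difference, so $\operatorname{osc}_H g\le C\,3^{-n\alpha}A$; dividing by the sidelength, $g$ is Lipschitz on $H$ with constant $\le C\,3^{n(1-\alpha)}A$. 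Combining the two, for any ball $B$ of radius $\rho$ and any hexagon $H$ one gets $\operatorname{osc}_{H\cap B}g\le C\rho^{\alpha}A$: use the oscillation bound when $3^{-n}\le\rho$ (so $3^{-n\alpha}\le\rho^\alpha$) and the Lipschitz bound when $3^{-n}\ge\rho$ (so $3^{n(1-\alpha)}\rho=(3^n\rho)^{1-\alpha}\rho^\alpha\le\rho^\alpha$). The $L^\infty$ bound from Theorem~\ref{thm:ext} gives $\|g\|_\infty\le CA$, which is harmless.

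To globalize, fix $x,y$ and set $\rho=|x-y|$. The geometric facts I would invoke, all consequences of the structure of the hexagonal exhaustion built in \cite{MR3702726} (and underlying the Lipschitz estimate quoted from there), are that a level-$m$ hexagon has diameter $\asymp 3^{-m}$ and lies within $C3^{-m}$ of $\partial\Omega$, and that adjacent hexagons have levels differing by $O(1)$; hence a ``perpendicular descent'' from an interior point to a nearby boundary point crosses only $O(1)$ hexagons of each level. Using this I would join $x$ to $y$ by an edge-path through hexagons lying in $B(x,C\rho)$: if the path never comes within $\rho$ of $\partial\Omega$ it meets only $O(1)$ hexagons, all of level $\ge\log_3(1/\rho)-O(1)$, and summing the per-hexagon bounds from ingredient~(ii) gives $|g(x)-g(y)|\le C\rho^\alpha A$; if it must come near $\partial\Omega$, route it down to a boundary point $z_x$ near $x$, for which $\sum_{k\ge0}C\,3^{-(m_x+k)\alpha}A=C\,3^{-m_x\alpha}A\le C\rho^{\alpha}A$ (a convergent geometric series, precisely because $3^{-\alpha}=\tfrac12<1$), do the same for $y$ with endpoint $z_y$, and bound $|f(z_x)-f(z_y)|$ by ingredient~(i) using $|z_x-z_y|\le C\rho$. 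Assembling the (at most three) pieces yields the estimate, and the claimed H\"older continuity follows.

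The main obstacle is this last step: making the chaining rigorous requires the fine combinatorics of the exhaustion near $\partial\Omega$, in particular the uniform bound on the number of hexagons of each level met by a ball of comparable radius and the existence of a descent path crossing $O(1)$ hexagons per level. A related subtlety dictates how the argument must be organized: one cannot chain a long way \emph{along} $\partial\Omega$ at a fine scale, since at level $m$ there are $\asymp 4^{m}$ boundary hexagons and the naive sum $\sum_m 4^m3^{-m\alpha}=\sum_m 2^m$ diverges; this is exactly why any stretch of the path running near $\partial\Omega$ must be replaced by a genuine arc of $\partial\Omega$, where ingredient~(i) is already in hand. Everything else — the linear-interpolation oscillation bound and the convergence of the geometric series — is routine, and hinges on the arithmetic identity $3^{\alpha}=2$, i.e.\ that the H\"older exponent of $\domain(\energy_K)$ is the square root of the energy renormalization factor.
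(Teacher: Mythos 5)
Your proposal follows essentially the same route as the paper's proof: the resistance estimate gives $\frac{\log2}{\log3}$-H\"older continuity of the trace on $\partial\Omega$, and the per-hexagon vertex-difference bounds from Theorem~\ref{thm:ext} give a gradient (Lipschitz) bound of order $2^{-n}3^{n}$ on each hexagon of sidelength $3^{-n}$, which combine to the H\"older estimate. The only difference is that you spell out the chaining/globalization step (including the correct observation that one must pass to the boundary arc rather than chain along fine boundary hexagons), which the paper asserts in one line; your version is a careful elaboration rather than a different argument.
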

\begin{proof}
The resistance estimate~\eqref{eq:resistbd} says that a function $f\in\domain(\energy_{\partial\Omega})$ is $\frac12$-H\"older in the resistance metric.  A pair of neighboring points in $x,y\in V_n$ are separated by resistance distance $R(x,y)\sim4^{-n}$, so $|f(x)-f(y)|\leq C2^{-n}$ for such points. Moreover these points are separated by Euclidean distance $3^{-n}$, so $f$ is $\frac{\log2}{\log3}$-H\"older in the Euclidean metric.

The values of $f$ on $V_n$ are those used as boundary data on hexagons of sidelength $3^{-n}$ in the extension construction, and they obviously contribute a term with gradient bounded by $2^n3^{-n}$.  Since the extension $g$ on such a hexagon involves terms from the extensions to hexagons of scale $3^{-m}$ for $m\leq n$, we may sum these to see that $|\nabla g|\leq C2^n3^{-n}$ on this hexagon.  In particular, $|g(x)-g(y)|\leq C|x-y|^{\log2/\log3}$.  This is true for hexagons of all scales, hence for all points in $\Omega$, and was already known for points on $\partial\Omega$, so the proof is complete.
\end{proof}

There are discontinuous functions  in $W(\Omega,\partial\Omega)$ because it contains $H^1_0(\Omega)$.  However, eigenfunctions of $\energy$, which exist by Corollary~\ref{cor:efnsexist}, can be shown to be H\"older continuous.

\begin{theorem}\label{thm:efnsHolder}
Eigenfunctions of the Dirichlet form \eqref{e-key} (equivalently of $L$ as in Corollary~\ref{cor:efnsexist}) are H\"older continuous. 
\end{theorem}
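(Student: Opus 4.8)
The plan is to split the eigenfunction into an interior part governed by classical elliptic regularity and a boundary part controlled through its trace, the glue being the fact that the snowflake domain is ``fat from the outside'' at every scale and every boundary point.

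Let $u$ be an eigenfunction, so $\energy(u,v)=\lambda\langle u,v\rangle_{L^2(\overline\Omega,m)}$ for all $v\in W(\Omega,\partial\Omega)$, and set $f:=u|_{\partial\Omega}$. Two preliminary observations. First (\emph{interior regularity}): testing against $v\in C_c^\infty(\Omega)\subset W(\Omega,\partial\Omega)$ annihilates the $\energy_{\partial\Omega}$ term and the $\mu$--part of $m$, leaving $\int_\Omega\nabla u\cdot\nabla v=\lambda\int_\Omega uv$; hence $-\Delta u=\lambda u$ in $\mathcal{D}'(\Omega)$, and Weyl's lemma with the usual elliptic bootstrap gives $u\in C^\infty(\Omega)$. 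Thus the only thing to prove is that $u$ extends Hölder-continuously to $\partial\Omega$. Second (\emph{the trace is Hölder}): by the resistance estimate~\eqref{eq:resistbd} and the bi-Hölder equivalence of the resistance and Euclidean metrics, $f\in\domain(\energy_{\partial\Omega})$ is $\tfrac{\log 2}{\log 3}$-Hölder on $\partial\Omega$, exactly as in the corollary preceding the theorem.

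Now I would reduce to zero boundary data. The key geometric input is that $\real^2\setminus\overline\Omega$ satisfies a uniform exterior corkscrew condition -- equivalently $\partial\Omega$ obeys a capacity density condition: by self-similarity, within distance $r$ of any $x_0\in\partial\Omega$ and for every $r$ up to the diameter, the complement contains a ball of radius comparable to $r$; this is just the statement that the unbounded complementary component of the quasicircle $\partial\Omega$ is a uniform (John) domain. Consequently every point of $\partial\Omega$ is regular for the Dirichlet problem, so $f$ has a harmonic extension $h$: it lies in $H^1(\Omega)$ (being the $H^1$-energy minimizer among extensions of $f$, which exist since $f$ is in the $H^1$-trace space by Wallin and Lancia), it is continuous on $\overline\Omega$ with $h|_{\partial\Omega}=f$, and the capacity density condition upgrades this to $h\in C^{0,\beta}(\overline\Omega)$ for some $\beta\in(0,1)$ depending on $\Omega$ and the Hölder exponent of $f$ (the usual geometric decay of boundary oscillation for harmonic functions under a uniform capacity lower bound). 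Set $w:=u-h\in H^1(\Omega)$; its trace is $f-f=0$, so $w\in H^1_0(\Omega)$ by Wallin's identification of $H^1_0(\Omega)$ with the kernel of the trace, and $-\Delta w=\lambda u\in L^2(\Omega)$ weakly. The classical boundary regularity for $H^1_0$ solutions on domains with a capacity density condition (De Giorgi--Nash--Moser together with the Wiener-type boundary estimate, applicable since $\lambda u\in L^q(\Omega)$ for some $q$ above the critical exponent $n/2=1$ in dimension two) gives $w\in C^{0,\gamma}(\overline\Omega)$ with $w|_{\partial\Omega}=0$. Then $u=h+w$ is Hölder on $\overline\Omega$, with exponent $\min(\beta,\gamma)$, which proves the theorem.

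The routine parts are the interior regularity and the trace bookkeeping; the main obstacle -- or rather, the point one must be careful to justify -- is the boundary behaviour, namely that the snowflake domain is fat from outside uniformly over all boundary points and scales, so that the quantitative Wiener/capacity-density boundary estimates apply with uniform constants. One is tempted to subtract instead the explicit Hölder extension of the earlier corollary; this does not cooperate with the argument above because that extension is only piecewise linear and its distributional Laplacian merely lies in $H^{-1}(\Omega)$, not in any $L^q$ with $q>1$, so the boundary regularity step cannot be run for the difference. Taking the harmonic extension circumvents this.
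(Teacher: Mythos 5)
Your decomposition is exactly the one the paper uses: write $u=h+w$ with $h$ the harmonic extension of the trace $u|_{\partial\Omega}$ and $w=u-h\in H^1_0(\Omega)$ solving $-\Delta w=\lambda u$, then treat the two pieces separately. Where you differ is in the supporting regularity inputs. For the $H^1_0$ piece the paper quotes the result that the Dirichlet Green's kernel on $\Omega$ lies in $W^{1,q}$ for some $q>2$ and is therefore H\"older, so that $w=\lambda G_0u$ is H\"older; you instead run De Giorgi--Nash--Moser together with a Wiener-type boundary estimate under a uniform exterior capacity density (corkscrew) condition, using that $\lambda u\in L^2$ with $2>n/2$ in dimension two. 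For the harmonic piece the paper's main proof invokes H\"older continuity of the Riemann mapping, while your capacity-density argument is essentially the alternative the paper itself sketches in the remark following the theorem (Kenig's class-$S$ domains and Lemma~1.2.4 there); your version has the advantage of being dimension-free and of making explicit the geometric fact doing the work, namely that the snowflake complement is uniformly fat at all scales and boundary points. Both routes are valid; the one genuinely nontrivial point you correctly flag --- uniformity of the exterior corkscrew/capacity bounds over all boundary points and scales --- does hold by self-similarity, and your observation that one must subtract the \emph{harmonic} extension rather than the explicit piecewise-linear one (whose distributional Laplacian is only in $H^{-1}$) is a sound piece of bookkeeping that the paper leaves implicit.
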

\begin{proof}[Sketch of the proof]
Suppose $u$ is an eigenfunction with eigenvalue $\lambda$, so $\energy(u,v)=-\lambda\langle u,v\rangle_{L^2(dm)}$ for all $v\in\domain(\energy)$, so by Corollary~\ref{cor:domainofform} it is in particular true for all $v\in H^1_0(\Omega)$. We will write $\energy_\Omega(u,v)=\int_\Omega  \nabla u \cdot \nabla v\, d\mathcal{L}^2$ for the usual seminorm in $H^1(\Omega)$.

Let $h_u$ be the harmonic function on $\Omega$ with boundary data $u|_{\partial\Omega}$, where we recall that the latter denotes the trace.  Since $h_u$ is the extension of $u|_{\partial\Omega}$ that minimizes $\energy_\Omega$ and we know $u$ is an extension for which $\energy_\Omega(u,u)<\infty$, we have $h_u\in H^1(\Omega)$ and it follows immediately that $h_u\in\domain(\energy)$, and in fact $u-h_u\in H^1_0(\Omega)$.  

Then for $v\in H^1_0(\Omega)$ we compute 
\begin{align*}
\energy_\Omega(u-h_u,v)
= \energy_\Omega(u,v)
= \energy(u,v)
= -\lambda\langle u,v\rangle_{L^2(dm)}
= -\lambda\langle u,v \rangle_{L^2(\Omega,d\mathcal{L}^2)}.
\end{align*}
This uses the fact that 
$\energy_\Omega(h_u,v)=0$ because $h_u$ is harmonic and that   boundary terms vanish because $u-h_u=0$ on $\partial\Omega$ and $v|_{\partial\Omega}=0$, the latter because $H^1_0(\Omega)$ is the kernel of the trace map.

This shows that $u-h_u\in H^1_0(\Omega)$ satisfies $\Delta_0(u-h_u)=\lambda u$, where $\Delta_0$ is the Dirichlet Laplacian. This reduces the problem to determining the regularity of the harmonic function $h_u$ and $u-h_u=\lambda G_0u$, where $G_0$ is the Dirichlet Green's operator on $\Omega$.

It is proved in~\cite{n2} that the kernel $g(x,y)$ of the Green's operator is in a Sobolev space $W^{1,q}(\Omega)$ with $q>2$, and it follows that $g(x,y)$ is H\"older continuous.  Hence so is $u-h_u=\lambda G_0u$.  The H\"older continuity depends on the regularity of the boundary $\partial\Omega$ and can, in principle, be estimated using the quantity $I_q(\Omega)$ on page 337 of~\cite{n2}.

The function $h_u$ is the harmonic extension of the trace $u|_{\partial\Omega}$. The latter is in $\domain(\energy_{\partial\Omega})$ and is therefore $\frac{\log2}{\log3}$-H\"older.  Moreover, the Riemann mapping on $\Omega $ has H\"older continuous extension to $\partial\Omega$, see~\cite{RaimoPalka,RP2,KoskelaReitich}, so $h_u$ is H\"older continuous and the proof is complete. 
\end{proof}

\begin{remark}
The proof of the H\"older continuity of $h_u$ discussed above admits a generalization which does not involve the Riemann mapping and is applicable in any dimension. We refer to~\cite[Chapter 1 Section 2]{Kenig} for the following reasoning; all references are to the definitions and results stated there.  Our domain is class $S$ as in Definition 1.1.20, so a Lipschitz function on $\partial\Omega$ has harmonic extension in a H\"older class $C^{\beta}$ by Lemma~1.2.4.  Taking a sequence of Lipschitz approximations to out boundary data $u|_{\partial\Omega}$ and examining the constants in the proof of Lemma~1.2.4 we discover that these blow-up in manner reflecting the H\"older continuity of $u|_{\partial\Omega}$, and in particular that $h_u$ is H\"older continuous with exponent the worse of the H\"older exponent of the boundary data and the exponent for a Lipschitz function.  A particular case is Remark~1.2.5.  We are indebted to Tatiana Toro for pointing out this reference and sketching the argument.
\end{remark}

A final consequence of Theorem~\ref{thm:ext} which will be significant later is that when high frequency oscillations on $\partial\Omega$ are extended to $\Omega$ they do not penetrate the domain very far and the energy of the extended function is very close to the energy of 
\begin{corollary}\label{cor:oschassmallext}
If the best piecewise harmonic approximations to $f\in\domain(\energy_{\partial\Omega})$ at scale $n$ is the zero function then the extension $g$ is supported within distance $C_1 3^{-n}$ of the boundary and satisfies a seminorm bound of the form
\begin{equation*}
	\int_\Omega |\nabla g|^2 
	\leq C_2 4^{-n} \energy_{\partial\Omega}(f).
	\end{equation*}
\end{corollary}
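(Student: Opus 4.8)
The plan is to handle the two assertions separately, both resting on Theorem~\ref{thm:ext} and on the inductive structure of the extension. First I would record what the hypothesis means. The scale-$n$ piecewise harmonic interpolant of $f$ (the function harmonic off $V_n$ that agrees with $f$ on $V_n$) is the energy-orthogonal projection of $f$ onto the finite-dimensional space of scale-$n$ piecewise harmonic functions, because on each cell of scale $n$ a harmonic function is energy-orthogonal to any function vanishing at that cell's boundary vertices. Hence ``the best piecewise harmonic approximation at scale $n$ is zero'' is equivalent to $f|_{V_n}=0$, and therefore $f$ vanishes on $V_m$ for every $m\le n$ since $V_m\subseteq V_n$.

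For the support statement I would run an induction over the stages of the hexagonal exhaustion, with inductive hypothesis ``$g\equiv 0$ on every hexagon of sidelength $3^{-m}$'' for $m=0,1,\dots,n-1$. The key point is that the vertex values of $g$ on a hexagon of sidelength $3^{-m}$ are produced by the linear operations of the construction from (i) values of $f$ at those vertices of the hexagon lying on $\partial\Omega$, which are points of $V_{m+1}$ (as recorded just before Theorem~\ref{thm:ext}), and (ii) values already assigned on edges of triangles from earlier stages. When $m\le n-1$ the inputs of type (i) vanish because $V_{m+1}\subseteq V_n$, and those of type (ii) vanish by the inductive hypothesis; so $g$, being the piecewise linear interpolant of these vertex values, is identically zero on the hexagon. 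Thus $g$ is supported on the union of the hexagons of sidelength $\le 3^{-n}$. Finally, the exhaustion of~\cite{MR3702726} is built layer by layer so that the hexagons introduced at stage $m$ lie within distance comparable to $3^{-m}$ of $\partial\Omega$; quoting this, the hexagons of sidelength $\le 3^{-n}$ all lie within some $C_1 3^{-n}$ of $\partial\Omega$, giving $\operatorname{supp}g\subseteq\{x:\operatorname{dist}(x,\partial\Omega)\le C_1 3^{-n}\}$.

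For the seminorm bound I would return to the first inequality of~\eqref{eq:seminormbdforextension}, namely $\int_\Omega|\nabla g|^2\le C'\sum_m\sum_{x\sim_m y}|f(x)-f(y)|^2$, where the inner sum runs over $m$-neighbor pairs in $V_m(\partial\Omega)$. Every term with $m\le n$ vanishes since $f\equiv 0$ on $V_m$, so the right-hand side equals $C'\sum_{m>n}\sum_{x\sim_m y}|f(x)-f(y)|^2$. Identifying $\sum_{x\sim_m y}|f(x)-f(y)|^2$ with $4^{-m}$ times the level-$m$ boundary graph energy from~\eqref{eqn:bdygraphenergy} (the three copies of $K$ contributing disjoint edge sets), and using monotonicity of these graph energies to bound the level-$m$ energy by $\energy_{\partial\Omega}(f)$, we obtain $\int_\Omega|\nabla g|^2\le C'\bigl(\sum_{m>n}4^{-m}\bigr)\energy_{\partial\Omega}(f)=\tfrac{C'}{3}4^{-n}\energy_{\partial\Omega}(f)$, which is the claimed inequality with $C_2=C'/3$.

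The geometric series and the index bookkeeping are routine; the step needing care is the support claim. One must make explicit that the \emph{value} of $g$ on a scale-$3^{-m}$ hexagon (not merely its oscillation) depends on $f$ only through $f|_{V_{m+1}}$ near that hexagon --- this is the localized dependence underlying the Lipschitz estimate of~\cite{MR3702726} --- and one must invoke the correct structural feature of the exhaustion, that hexagons of a given sidelength sit at distance from $\partial\Omega$ comparable to that sidelength. Aligning the indexing conventions with~\cite{MR3702726} is the main thing to be careful about.
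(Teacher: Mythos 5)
Your proposal is correct and follows essentially the same route as the paper's proof: vanishing of $f$ on $V_n$ forces $g\equiv 0$ on all hexagons of sidelength $3^{-m}$ with $m\le n$ (hence the support bound), and killing the $m\le n$ terms in the middle expression of~\eqref{eq:seminormbdforextension} leaves a tail that sums to $C4^{-n}\energy_{\partial\Omega}(f)$. You simply supply details the paper leaves implicit --- the equivalence of the hypothesis with $f|_{V_n}=0$, the induction over hexagon scales, and the explicit geometric series --- which is a useful, not a divergent, elaboration.
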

\begin{proof}
We saw in the construction that the values from  $V_n$ were the only interpolation data for hexagons of sidelength $3^{-m}$, $m\leq n$.  If $f$ is zero at these points then $g\equiv0$ on these hexagons, so its support is within distance at most $C_1 3^{-n}$ from $\partial\Omega$.  Moreover, the terms corresponding to $V_m$, $m\leq n$ in the middle term of the seminorm bound~\eqref{eq:seminormbdforextension} are zero, in which case comparing this to the definition~\eqref{eqn:bdygraphenergy} of the boundary energy we see that the energy scaling factor provides an additional factor of $4^{-n}$.
\end{proof}

%%%%%%%%%%%%%%%%%%%%%%%%%%%%%%%%%%%%%%%%%%%%%%%%%%%%%%%

\section{Inductive mesh construction and discrete energy forms}
\label{sec:Inductive mesh}
We inductively construct the usual approximations to the closed snowflake domain $\overline\Omega$, along with a triangulating mesh, in the following manner.  The scale $n$ approximation consists of a collection of equilateral triangles with side length $3^{-n}$, the mesh is their edges, and those edges which belong to only one triangle are called boundary edges. When $n=0$ there is exactly one equilateral triangle.  The scale $n+1$ approximation is obtained as shown in Figure~\ref{fig:mesh_1}: each scale $n$ triangle is subdivided into nine triangles as on the left, and triangles are appended to the centers of boundary edges as  on the right. (The figure shows the case $n=0$; at future steps at most two edges of a triangle can be boundary edges.)  The $n=2$ approximation is (f) in Figure~\ref{fig:constructionVertices}.

The mesh is treated as a graph $\Gamma_n$ with vertices $\V_n$.  Note that the the vertices $V_n$ in the graphs in Section~\ref{sec:Koch Snowflake Domain} are a subset of $\V_n$ and indeed the graphs constructed there are simply the boundary subgraphs of the $\Gamma_n$.   Evidently, two vertices $p,q \in\V_n$ are connected by an edge of the mesh if and only if $|p-q|=3^{-n}$, where $|\cdot|$ is the Euclidean norm, in which case we write $p \sim_n q$. The vertices contained in boundary edges of $\Gamma_n$ are  called boundary vertices, all other vertices are interior vertices.

\begin{figure}
{\centering
   \includegraphics[scale=0.5]{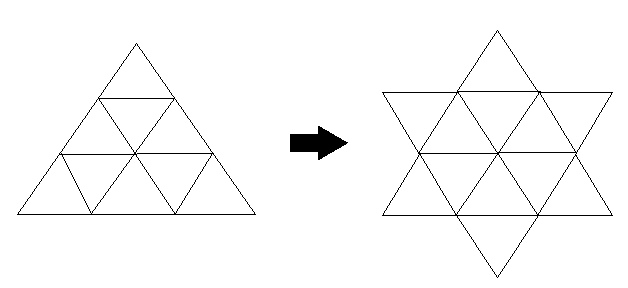} 
    \caption{Mesh construction through scaled equilateral triangles.} 
    \label{fig:mesh_1}
    }
\end{figure}

Let $l(\V_n)=\left\{u \ | \ u:\V_n \to \real \right\}$ and define the graph energy of $u \in l(\V_n)$ by
\begin{equation}
\label{q:graphEnergy}
\energy_n(u) = \sum_{p_1,p_2 \in\V_n} c_n(p_1,p_2)(u(p_1)-u(p_2))^2
\end{equation}
where $c_n(p_1,p_2)$ is the conductance between points $p_1,p_2$, and is given by
\begin{equation}
\label{eq:graphEnergywieghts}
c_n(p_1,p_2) = \begin{cases}
1 &	\textnormal{if $p_1$ and $p_2$ are connected by an interior edge,}\\
4^n & \textnormal{if $p_1$ and $p_2$ are connected by a boundary edge,}\\
0 & \textnormal{if $p_1$ and $p_2$ are not connected by an edge}.
\end{cases} 
\end{equation}

Note that for boundary vertices both~\eqref{eq:graphEnergywieghts} and~\eqref{eq:discreteMeasure} agree with those in Section~\ref{sec:Koch Snowflake Domain}  and therefore the restriction of~\eqref{q:graphEnergy} to edges in the boundary of $\Gamma_n$ coincides with the energy defined in~\eqref{eqn:bdygraphenergy}.  Moreover, the terms corresponding to edges in $\Omega$ have weight $1$, and it follows from Lemma~\ref{lem:energyisdiffssquared} that for a function $g$ which is piecewise linear on equilateral triangles the sum of squared differences of the function over edges in the triangulation is a constant multiple of the Dirichlet energy $\int_{\Omega} |\nabla g|^2$.  After computing this constant and verifying that our triangulation by $\Gamma_n$ is obtained from the $n^\text{th}$ triangulation in the extension of Theorem~\ref{thm:ext} simply by dividing all triangles of sidelength larger than $3^{-n}$ into subtriangles of sidelength $3^{-n}$ the following consequence is immediate.

\begin{theorem}
\label{propositionDiscreteEnergy}
If $u\in H^1_{\partial\Omega}$ is the extension of $f\in\domain(\energy_{\partial\Omega})$ by the extension procedure of Theorem~\ref{thm:ext} then $\energy (u) = \lim_{n \to \infty} \energy_n(u)$.
\end{theorem}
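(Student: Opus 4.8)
\emph{Plan of proof.} The strategy is to separate $\energy_n(u)$ into its interior-edge and boundary-edge parts and to show each converges to the corresponding part of $\energy$ in~\eqref{e-key}. Throughout, $u$ is the extension of $f$ from Theorem~\ref{thm:ext}; it is continuous on $\overline\Omega$, hence defined at every vertex of $\V_n$, and, crucially, it is affine on each triangle of $\Gamma_n$, because $\Gamma_n$ is (as recalled just before the theorem) the $n$-th triangulation underlying that extension with each triangle of sidelength larger than $3^{-n}$ subdivided. I will write $\overline\Omega_n$ for the scale-$n$ polygonal domain triangulated by $\Gamma_n$; these increase to $\overline\Omega$ with $\mathcal{L}^2(\Omega\setminus\overline\Omega_n)\to 0$.

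For the boundary part, I would note that the edges of $\Gamma_n$ on $\partial\overline\Omega_n$, equipped with the conductance $4^n$ from~\eqref{eq:graphEnergywieghts}, are precisely those occurring in the level-$n$ graph energies~\eqref{eqn:bdygraphenergy} on the three copies $K_j$ of the Koch curve, and that $u|_{\partial\Omega}=f$ because $u$ is continuous and interpolates $f$ on the dense set $\bigcup_n(\V_n\cap\partial\Omega)$. Hence the boundary-edge contribution to $\energy_n(u)$ is, up to a fixed combinatorial normalization, $\sum_{j=1}^{3}\energy_K^{(n)}\bigl(f|_{K_j}\circ\varphi_j^{-1}\bigr)$, which by the resistance-form theory recalled in Section~\ref{sec:Koch Snowflake Domain} (the sequence $\{\energy_K^{(n)}\}$ is nondecreasing with limit $\energy_K$ on $\domain(\energy_K)$) increases to $\energy_{\partial\Omega}(f)$, a finite quantity since $f\in\domain(\energy_{\partial\Omega})$.

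For the interior part, I would apply Lemma~\ref{lem:energyisdiffssquared} to each triangle of $\Gamma_n$, on which $u$ is affine: with $\kappa$ the constant of that lemma, $\int_{\overline\Omega_n}|\nabla u|^2=\kappa\sum_{\tau}\sum_{e\subset\tau}(u(p_e)-u(q_e))^2$, and regrouping by edge, each interior edge is counted twice and each edge on $\partial\overline\Omega_n$ once. Comparing with~\eqref{q:graphEnergy}--\eqref{eq:graphEnergywieghts}, the interior-edge contribution to $\energy_n(u)$ equals $\tfrac1\kappa\int_{\overline\Omega_n}|\nabla u|^2$ minus the sum of squared differences of $u$ over the edges of $\Gamma_n$ on $\partial\overline\Omega_n$. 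The first term tends to $\tfrac1\kappa\int_\Omega|\nabla u|^2$ by dominated convergence, using $u\in H^1(\Omega)$ from Theorem~\ref{thm:ext} and $\mathcal{L}^2(\Omega\setminus\overline\Omega_n)\to 0$; the subtracted term is exactly $4^{-n}$ times the level-$n$ boundary graph energy of $f$, hence at most $4^{-n}\energy_{\partial\Omega}(f)\to 0$ --- the same $4^{-n}$ energy-scaling exploited in Corollary~\ref{cor:oschassmallext}.

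Adding the two limits, and absorbing $\kappa$ and the combinatorial factor into the normalization (equivalently into the choice of $c_0$ in~\eqref{e-key}; this is the ``computing this constant'' step mentioned before the theorem), gives $\energy_n(u)\to\int_\Omega|\nabla u|^2+c_0\energy_{\partial\Omega}(u|_{\partial\Omega})=\energy(u)$. I expect the only real obstacles to be bookkeeping: pinning down precisely the identification of $\Gamma_n$ with the subdivided $n$-th triangulation of Theorem~\ref{thm:ext}, so that Lemma~\ref{lem:energyisdiffssquared} genuinely applies edge-by-edge, and tracking the normalizing constants so the limiting identity is exact. The one non-cosmetic point --- that replacing $\tfrac1\kappa\int_{\overline\Omega_n}|\nabla u|^2$ by a weight-one sum over interior edges of $\Gamma_n$ costs only a boundary-edge sum decaying like $4^{-n}$ --- is exactly the mechanism already isolated in Corollary~\ref{cor:oschassmallext}.
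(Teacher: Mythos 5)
Your argument is correct and follows essentially the same route as the paper, which simply declares the theorem ``immediate'' from the two observations preceding it: the boundary edges of $\Gamma_n$ with weight $4^n$ reproduce the monotone graph energies $\energy_K^{(n)}$, and Lemma~\ref{lem:energyisdiffssquared} converts the weight-one interior sum into the Dirichlet integral over the polygonal approximations. You supply the details the paper omits --- the monotone convergence to $\energy_{\partial\Omega}(f)$ and $\int_\Omega|\nabla u|^2$, and the observation that regrouping triangle sums by edge leaves a boundary-edge discrepancy of order $4^{-n}\energy_{\partial\Omega}(f)$ --- and your caveat about pinning down the normalizing constants is exactly the ``after computing this constant'' step the paper defers.
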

\begin{remark}
$\energy_n(u)$ should be thought of as the graph energy of the restriction of $u$ to $\V_n$.
\end{remark}

We now introduce a measure on $\V_n$ by
\begin{equation}
\label{eq:discreteMeasure}
m_n(p)=
\begin{cases}
9^{-n} & \text{if $p$ is an interior vertex}  \\ 4^{-n} & \text{if $p$ is a boundary vertex}.
\end{cases}
\end{equation}
This equips $l(\V_n)$ with an inner product 
\begin{align*}
\label{eq:discreteInnerProduct}
	\langle \cdot , \cdot\rangle_n &: l(\V_n) \times l(\V_n)  \to  \real \\
	\langle u,v \rangle_n &=\sum_{p \in \V_n} m_n(p) u(p)v(p),
\end{align*}
and we can then define a discrete Laplacian $L_n$ so that $\energy_n(u,v)=- \langle L_n u,v \rangle_n$ as follows.
\begin{definition}
\label{graphLaplace}
Let $u \in l(V_n)$ and $p \in \V_n$. The ($n$-level) discrete Laplacian of $u$ at $p$ is
\begin{equation*}
L_n u (p) = \frac{2}{m_n(p)}\sum_{q \in V_n} c_n(p,q)(u(p)-u(q)),
\end{equation*}
with conductances $c_n(p_1,p_2)$ from~\eqref{eq:graphEnergywieghts} and vertex measure $m_n(p)$ from~\eqref{eq:discreteMeasure}.
\end{definition}

%%%%%%%%%%%%%%%%%%%%%%%%%%%%%%%%%%%%%%%%%%%%%%%%%%%%%%%%%%%

%%%%%%%%%%%%%%%%%%%%%%%%%%%%%%%%%%%%%%%%%%%%%%%%%%%%%%%%%%%%%%

\section{Numerical Results}
\label{sec:numericalResults}
In order to investigate the spectral properties of the Dirichlet form $\energy$ we constructed the discrete Laplacian matrix $L_n$ from Definition~\ref{graphLaplace} and solved $L_n\phi=\lambda \phi$ numerically.  We compared the resulting eigenvalues and eigenvectors with those of the Dirichlet Laplacian, which we denote $\tilde{L}_n$.  In this section we almost exclusively present data for $n=4$, and denote the $j^\text{th}$ eigenvector and eigenvalue of $L_4$ by $\phi_j$ and $\lambda_j$, where $\lambda_j\leq\lambda_{j+1}$ for all $j$ and eigenvalues are repeated according to their multiplicity.  We label the eigenvectors $\tilde{\phi}_j$ and $\tilde{\lambda}_j$ in the same manner.

In order to justify the above numerical approach to investigating the spectrum of $\energy$ we should like to prove a result of the following kind.  We believe this ought to be possible by methods analogous to those in~\cite{l1,l2,l3,EvansE}. 
\begin{conjecture}
Our discrete and finite element approximations to eigenfunctions of $\energy$ converge uniformly, and hence the spectrum of $L_n$ converges to that of $L$.
\end{conjecture}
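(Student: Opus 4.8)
The plan is to recast the statement as a question of convergence of quadratic forms on a scale of Hilbert spaces, to establish a Mosco-type convergence of $\energy_n$ to $\energy$ together with an asymptotic compactness property, to deduce from these the convergence of eigenvalues and of eigenfunctions in $L^2$, and finally to bootstrap $L^2$-convergence to uniform convergence using the equicontinuity that uniform energy bounds force. First I would fix the identification between discrete and continuous objects: for each $n$ let $\Phi_n\colon l(\V_n)\to W(\Omega,\partial\Omega)$ send $u$ to its piecewise linear interpolant on the triangles of $\Gamma_n$. By Lemma~\ref{lem:energyisdiffssquared} the interior part of $\energy_n(u)$ is a fixed multiple of $\int_\Omega|\nabla\Phi_n u|^2$, while on $\partial\Omega$ the form $\energy_n$ restricts to the boundary graph energy~\eqref{eqn:bdygraphenergy}; the discrete inner product is the mass-lumped quadrature for $\langle\Phi_n\cdot,\Phi_n\cdot\rangle_{L^2(\overline\Omega,m)}$, and the weights~\eqref{eq:discreteMeasure} are chosen precisely so that this quadrature is uniformly comparable to, and asymptotically equal to, the true inner product on sequences of bounded energy. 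Since the finite element approximation is by definition $\Phi_n$ applied to the discrete eigenvectors, it suffices to treat the discrete problem, with ``$u_n\to u$'' meaning $\Phi_n u_n\to u$ in $L^2(\overline\Omega,m)$ in the sense appropriate to a convergent sequence of Hilbert spaces.

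Next I would prove Mosco convergence of $\energy_n$ to $\energy$. For the $\liminf$ inequality, given $\Phi_n u_n\to u$ with $\sup_n\energy_n(u_n)<\infty$, lower semicontinuity of $\int_\Omega|\nabla\cdot|^2$ handles the interior, and the monotonicity of the graph energies $\energy_K^{(n)}$ recalled in Section~\ref{sec:Koch Snowflake Domain} handles the boundary after passing to the trace. For the recovery sequence I would use the splitting $W(\Omega,\partial\Omega)=H^1_0(\Omega)+H^1_{\energy(\partial\Omega)}(\Omega)$ of Corollary~\ref{cor:domainofform}: write $u=u_0+g$ with $u_0\in H^1_0(\Omega)$ and $g$ the extension of some $f\in\domain(\energy_{\partial\Omega})$. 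For $g$, Theorem~\ref{propositionDiscreteEnergy} already gives $\energy(g)=\lim_n\energy_n(g)$, so the restrictions of $g$ to $\V_n$ form a recovery sequence once the density of piecewise harmonic functions in $\domain(\energy_{\partial\Omega})$ is used to control the tail; for $u_0$ one uses density of $C_c^\infty(\Omega)$ together with standard finite element interpolation estimates on the quasi-uniform interior triangulation.

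The remaining analytic input is asymptotic compactness: if $\sup_n(\energy_n(u_n)+\|u_n\|_n^2)<\infty$ then $\{\Phi_n u_n\}$ is precompact in $L^2(\overline\Omega,m)$. On the interior this is Rellich--Kondrachov for the uniformly $H^1$-bounded functions $\Phi_n u_n$; on the boundary it follows from the resistance estimate~\eqref{eq:resistbd} and the relation $R(x,y)\asymp|x-y|^{\log4/\log3}$, which make the boundary traces uniformly $\tfrac{\log2}{\log3}$-H\"older and hence precompact in $C(\partial\Omega)$ and in $L^2(\partial\Omega,\mu)$, while the trace inequality built into the inner product on $W(\Omega,\partial\Omega)$ glues the two pieces. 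Mosco convergence together with asymptotic compactness is exactly the input used in~\cite{l1,l2,l3} to obtain convergence of $L_n$ to $L$ in the generalized norm-resolvent sense for operators acting in varying spaces; since $L$ has discrete spectrum by Corollary~\ref{cor:efnsexist}, this gives $\lambda_j^{(n)}\to\lambda_j$ with multiplicity and convergence of the associated spectral projections, i.e.\ $L^2$-convergence of eigenfunctions.

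Finally, to upgrade $L^2$-convergence to uniform convergence I would establish a uniform-in-$n$ H\"older estimate for eigenfunctions of $L_n$, mirroring Theorem~\ref{thm:efnsHolder}: write a discrete eigenfunction $\phi$ as $h_n+G_n(\lambda\phi)$, where $h_n$ is discrete-harmonic in the interior with the boundary values of $\phi$ and $G_n$ is the discrete Dirichlet Green operator. The term $h_n$ inherits the $\tfrac{\log2}{\log3}$-H\"older bound from~\eqref{eq:resistbd} and the extension estimates of Theorem~\ref{thm:ext}, with Corollary~\ref{cor:oschassmallext} quantifying how weakly high-frequency boundary oscillations penetrate the interior, and $G_n(\lambda\phi)$ is controlled by discrete elliptic (De Giorgi--Nash--Moser) estimates on the interior triangular lattice, uniform in $n$ after rescaling lengths. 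Equicontinuity then combines with the $L^2$-convergence above, through Arzel\`a--Ascoli, to yield uniform convergence of the eigenfunctions on $\overline\Omega$. I expect the last step to be the main obstacle: obtaining H\"older bounds for the discrete eigenfunctions that are uniform in $n$ \emph{near} $\partial\Omega$, where the interior conductances $1$ and boundary conductances $4^n$, and the interior and boundary measures $9^{-n}$ and $4^{-n}$, sit at incompatible scales, so that the heuristic that ``the interior and boundary forms interact only weakly'' must be made quantitative; a secondary difficulty is the bookkeeping required to run the Mosco and compactness arguments on the varying spaces $l(\V_n)$ with their mass-lumped inner products rather than on a single fixed Hilbert space.
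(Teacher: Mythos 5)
The statement you are addressing is stated in the paper as a \emph{conjecture}: the authors offer no proof, only the remark that they ``believe this ought to be possible by methods analogous to those in~\cite{l1,l2,l3}.'' Your outline follows exactly the route the paper gestures at --- Mosco-type convergence of the forms $\energy_n$ on the varying spaces $l(\V_n)$, plus asymptotic compactness, to get generalized norm-resolvent convergence and hence convergence of eigenvalues and $L^2$-convergence of spectral projections, then an equicontinuity argument to upgrade to uniform convergence. As a roadmap this is sound and is the standard machinery for such problems, but it is a proposal rather than a proof, and the places where it is thinnest are precisely the places where the real difficulty lies, so the conjecture should be regarded as still open after your argument.

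Two gaps deserve to be named concretely. First, in the final step you decompose a discrete eigenfunction as $\phi = h_n + G_n(\lambda\phi)$ and propose to control $h_n$ via Theorem~\ref{thm:ext} and Corollary~\ref{cor:oschassmallext}; but those results concern the specific piecewise-linear extension operator of Section~\ref{sec:domainenergy}, not the discrete-harmonic extension, and no uniform-in-$n$ H\"older estimate for discrete-harmonic functions on $\Gamma_n$ with boundary data of bounded discrete boundary energy is established anywhere in the paper. This is exactly where the incompatible scalings (conductances $1$ versus $4^n$, measures $9^{-n}$ versus $4^{-n}$) enter, and you correctly flag it as the main obstacle --- but flagging it does not close it. Second, in the Mosco $\liminf$ inequality you invoke monotonicity of the graph energies $\energy_K^{(m)}$ ``after passing to the trace,'' but that monotonicity applies to a \emph{fixed} function restricted to successive vertex sets, whereas here you have a sequence of different functions $u_n$ on different graphs; to compare $\energy_K^{(m)}$ of the limit with the level-$m$ truncation of $\energy_n(u_n)$ you need pointwise convergence of $u_n$ on $V_m$, which must itself be extracted from the resistance estimate and an equicontinuity argument on the boundary. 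Similarly, the recovery sequence for $u = u_0 + g$ does not follow by adding recovery sequences for $u_0$ and $g$, since the energy is quadratic and the cross terms $\energy_n(u_{0,n}, g_n)$ must be shown to converge to $\energy(u_0,g)$. Each of these is plausibly fixable, but as written the argument does not constitute a proof of the conjecture.
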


\subsection{Algorithm and  Implementation}
\begin{figure}[th]
{\centering
\begin{tabular}{cccc}
    \includegraphics[scale=0.31]{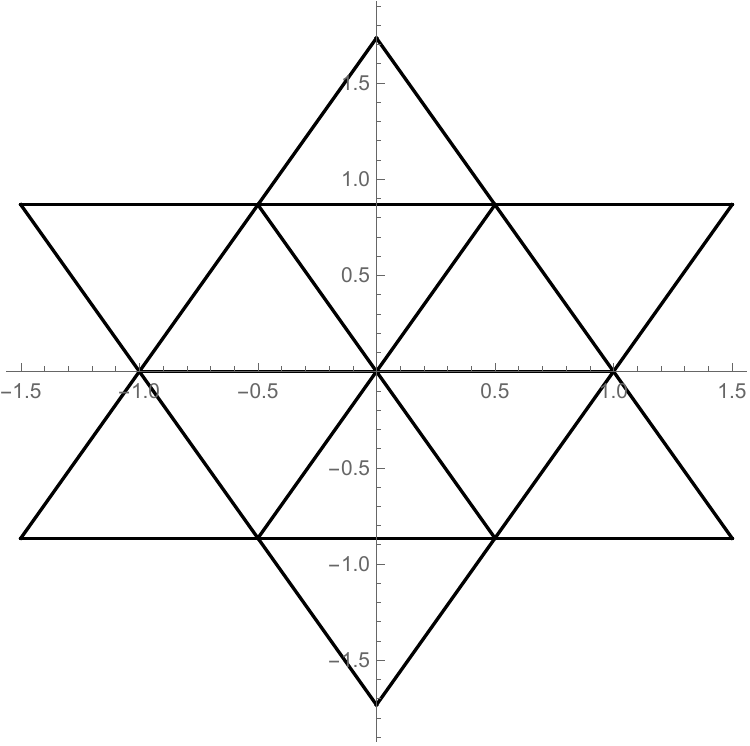} &
    \includegraphics[scale=0.31]{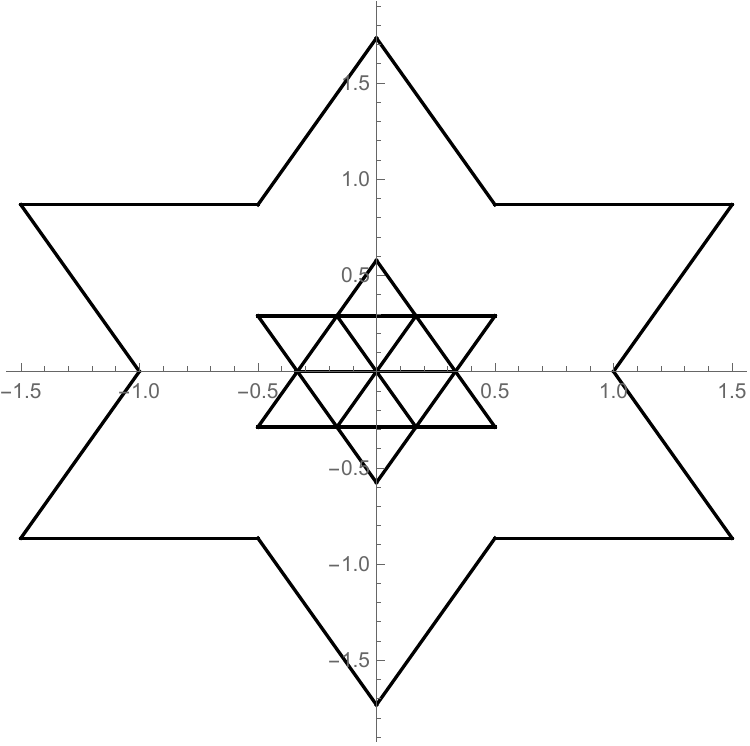} &
    \includegraphics[scale=0.31]{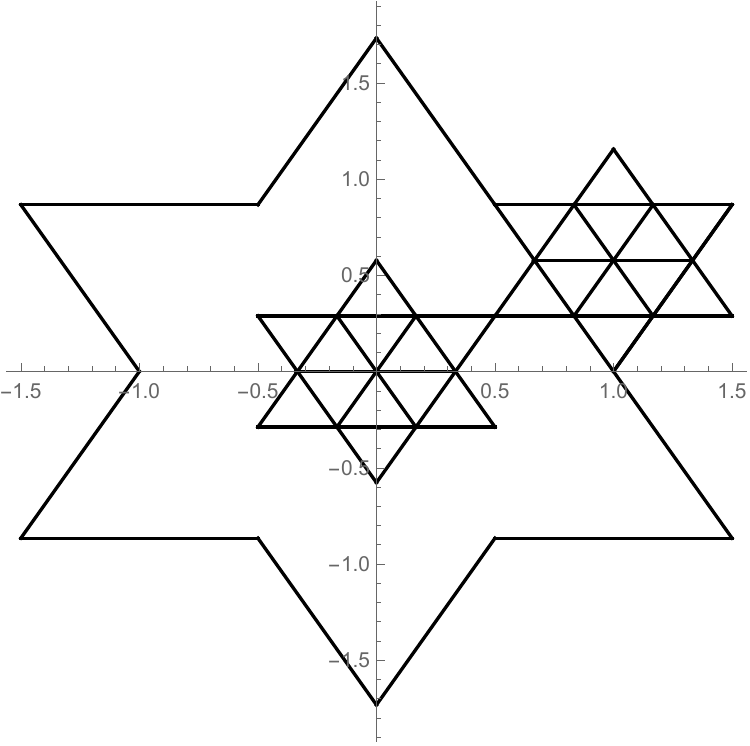} \\
(a) & (b) & (c) 
\\
    \includegraphics[scale=0.31]{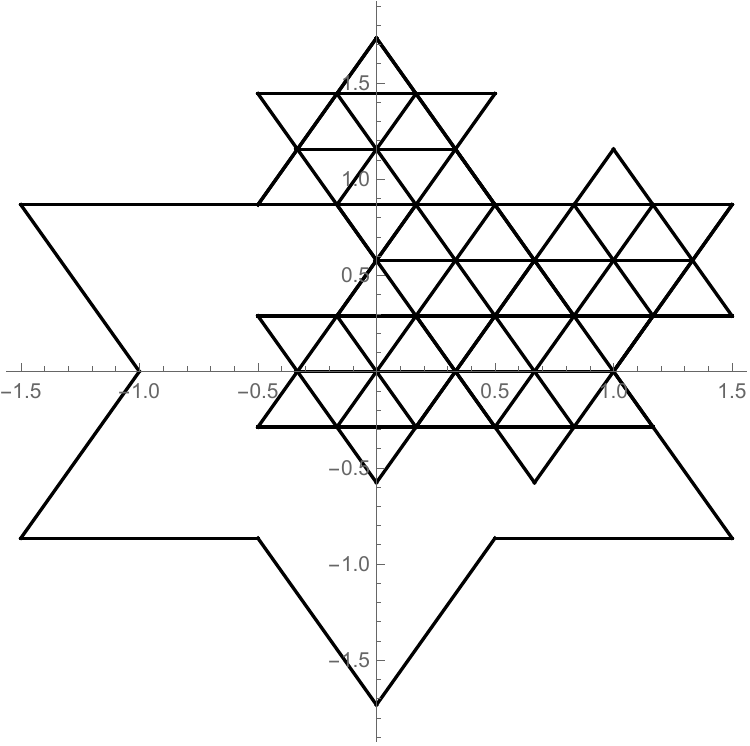} & 
    \includegraphics[scale=0.31]{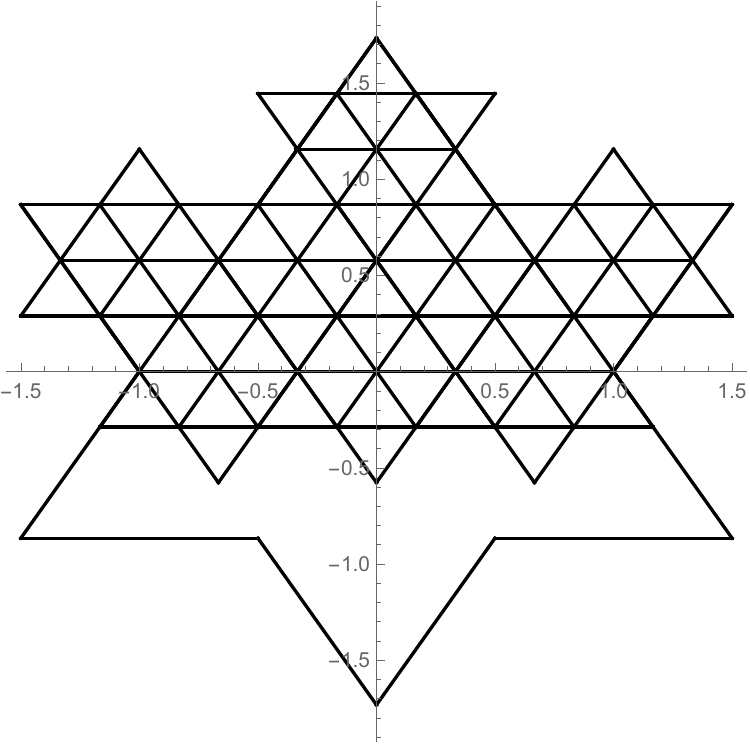} & 
    \includegraphics[scale=0.31]{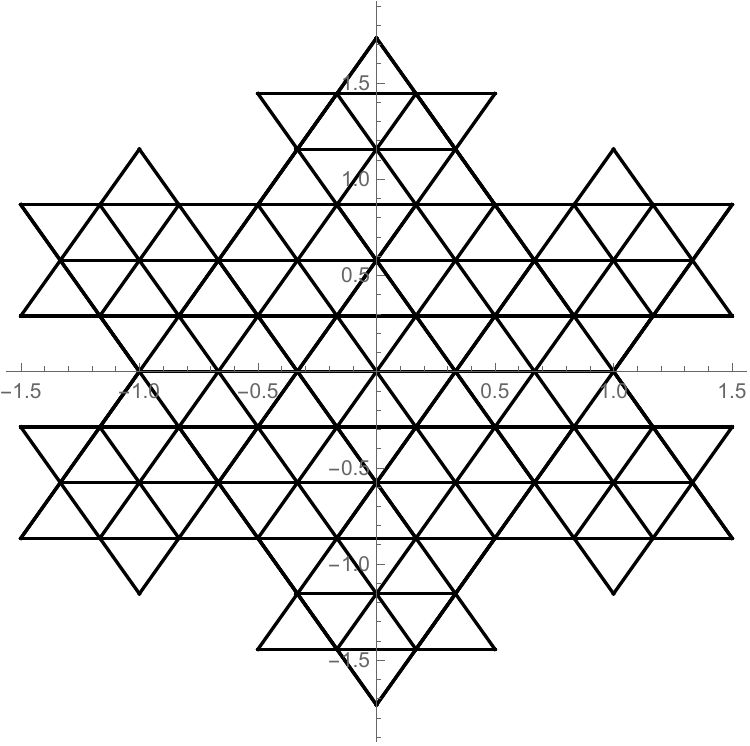} \\
(d) & (e) & (f) 
\\
\end{tabular}
\caption{Algorithm to generate the vertices of the graph $\Gamma_n$}
\label{fig:constructionVertices}
}
\end{figure}

We constructed $L_n$ by an iterative procedure written in the programming language Python and then computed and graphed the eigenvalues and eigenfunctions of $L_n$ using Mathematica.  We also computed the matrix $\tilde{L}_n$ of the Dirichlet Laplacian so that comparison of $L_n$ and $\tilde{L}_n$ could be used to identify effects of the boundary Dirichlet form.

The construction of $L_n$ involved constructing the mesh graphs $\Gamma_n$.  The vertices of $\Gamma_1$, which is shown in Figure~\ref{fig:constructionVertices}(a),  were hard-coded. Construction of the vertices of $\Gamma_2$ was achieved by the steps (b)--(f) of Figure~\ref{fig:constructionVertices}: specifically, $\Gamma_1$ was scaled by $\frac13$, translated to cover the region in the first quadrant and reflected to the other quadrants. Duplicate vertices were deleted and the process repeated to construct the vertices of $\Gamma_n$ for $n=3$ and $n=4$.

Construction of $L_n$ is equivalent to finding the vertex adjacencies of $\Gamma_n$; an elementary way to do this is to find pairs of points separated by distance $3^{-n}$.  However it was not efficient to identify boundary vertices in this manner, and we used KD trees to improve run times for this aspect of the problem. With the adjacencies and boundary edges identified it is easy to weight these and construct the  matrix $L_n$. The matrix $\tilde{L}_n$ of the Dirichlet Laplacian is obtained by deleting the rows and columns corresponding to boundary vertices.

\subsection{The eigenvalue counting function}
The eigenvalue counting function for a general symmetric matrix $M$ is defined by
\begin{equation*}
N_M(x) := \# \{\lambda \leq x \ | \ \lambda \text{ is an eigenvalue of } M \}
\end{equation*}
where eigenvalues are counted with multiplicity.
This generalizes to operators with discrete spectrum in $[0,\infty)$.
It is well-known that if the operator in question is the (positive) Laplacian $\Delta$ on a Euclidean domain then $N_\Delta$ encodes a great deal of geometric information: for example, the Weyl law says that it grows as $C_d x^{d/2}$, where $d$ is the dimension and $C_d$ encodes the volume of the domain; more precise asymptotics can be obtained for certain classes of domains.

\begin{figure}
{\centering
%\begin{tabular}{ccc}
%\includegraphics[width=.7\textwidth]{count_2.png}
\includegraphics[width=.48\textwidth]{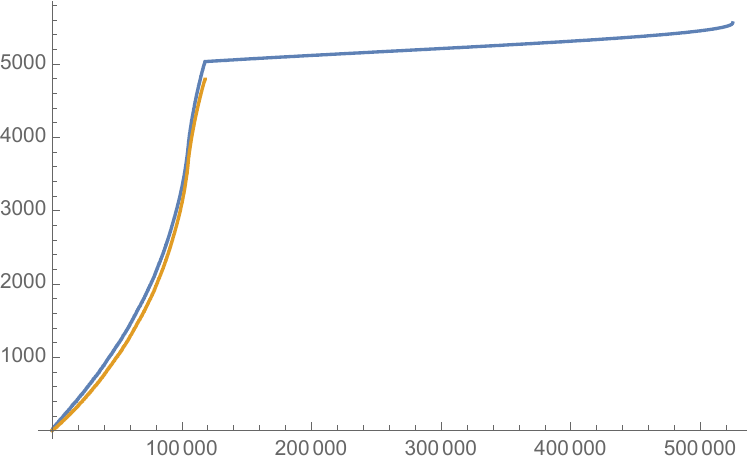} \hfill
\includegraphics[width=.48\textwidth]{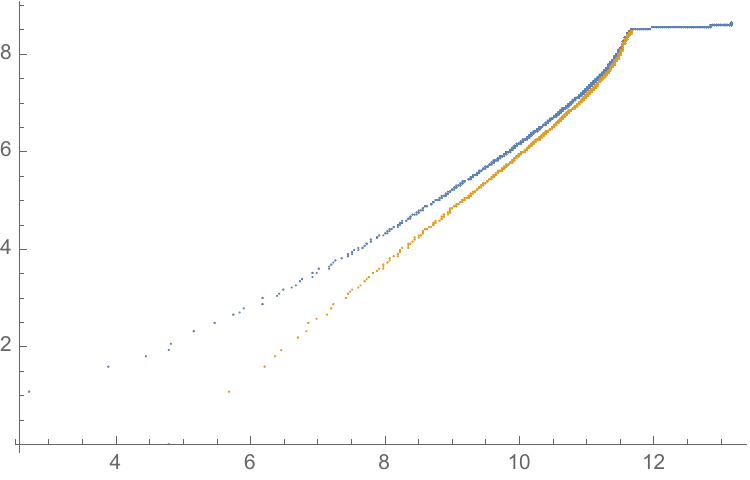}
%\end{tabular}
\caption[Left: Eigenvalue counting functions $N_{L_4}$ (blue) and $N_{\tilde{L}_4}$ (orange) Right: Log-Log plots of  $N_{L_4}$ (blue) and $N_{\tilde{L}_4}$ (orange)]{\begin{tabular}[t]{ @{} l @{\ } l @{}}
    Left: & Eigenvalue counting functions  $N_{L_4}$ (blue) and $N_{\tilde{L}_4}$ (orange) \\
    Right: & Log-Log plots of  $N_{L_4}$ (blue) and $N_{\tilde{L}_4}$ (orange)  
  \end{tabular}}
\label{fig:EigenvalueCount}
}
\end{figure}

The eigenvalue counting functions of $L_n$ and the Dirichlet Laplacian $\tilde{L}_n$ for the $n=4$ approximation are displayed in Figure~\ref{fig:EigenvalueCount}, along with Log-Log plots that identify their growth behavior.  It is immediately apparent that $N_{L_4}$ has two regimes, a low eigenvalue regime in which it fairly closely tracks the behavior of $N_{\tilde{L}_4}$, though with a lower initial rate of growth, and a high eigenvalue regime in which its growth is entirely different.  Moreover, there is an additional feature which is not apparent from the graphs.  The largest eigenvalue of $\tilde{L}_4$ is $\tilde{\lambda}_{4789}=118039.37$, and the regime-change point on the $L_4$ graph occurs at the almost identical value $\lambda_{5028} = 118038.02$.  Graphs of the corresponding eigenfunctions are also very similar; they are both highly oscillatory, so their graphs look multi-valued, but the similarity in the macroscopic profile of the oscillations is readily apparent in Figure~\ref{fig:lastEigvecs2Kink}.

\begin{figure}
{\centering
\begin{tabular}{cc}
\includegraphics[width=0.5\textwidth]{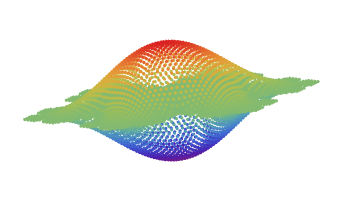}&
\includegraphics[width=0.5\textwidth]{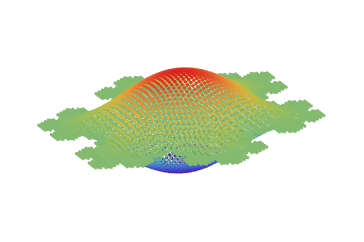}
\\
$\phi_{5028}$,  $\lambda_{5028}=118038.02$ & $\tilde{\phi}_{4789}$, $\tilde{\lambda}_{4789} = 118039.37$
\end{tabular}
\caption{The eigenvector $\phi_{5028}$ of $L_n$ at the regime change $\lambda\sim118038.5$ is qualitatively similar to the last Dirichlet eigenvector $\tilde{\phi}_{4789}$}
\label{fig:lastEigvecs2Kink}
}
\end{figure}

\subsection{Eigenvectors and eigenvalues in the low eigenvalue regime}\label{ssec:loweval}
The eigenstructure of $L_n$ in the low eigenvalue regime holds few surprises.  Comparing it to that for $\tilde{L}_n$ we found that the boundary Dirichlet form permitted many additional low-frequency configurations, some of which are shown in Figure~\ref{fig:first3eigvecs}.  It is apparent from the counting function plots in Figure~\ref{fig:EigenvalueCount} that this difference in the growth of $N_{L_n}$ and $N_{\tilde{L}_n}$ decreases as $\lambda$ increases.

\begin{figure}[h!]
{\centering
\begin{tabular}{ccc}
\includegraphics[width=0.3\textwidth]{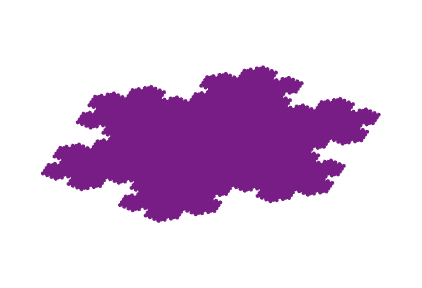}&\includegraphics[width=0.3\textwidth]{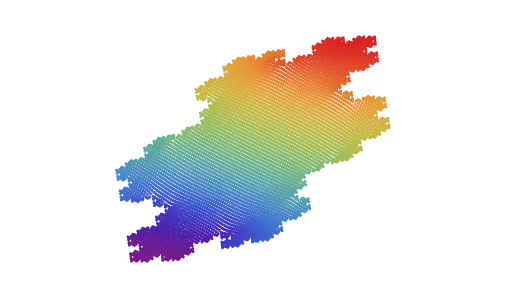}&\includegraphics[width=0.3\textwidth]{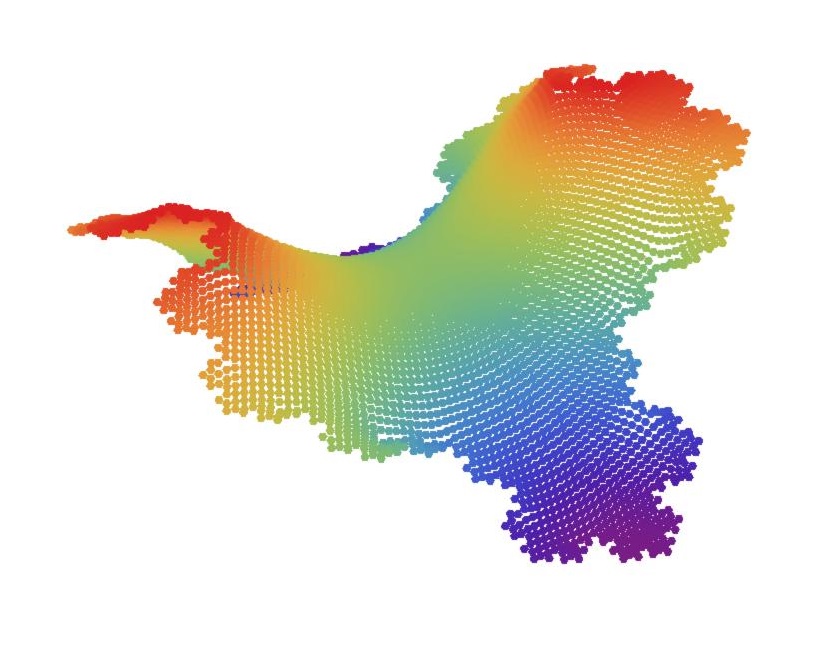}\\
$\phi_1$, $\lambda_1=0$ & $\phi_2$, $\lambda_2=15.1$ & $\phi_4$, $\lambda_4=48.1$\\
\includegraphics[width=0.3\textwidth]{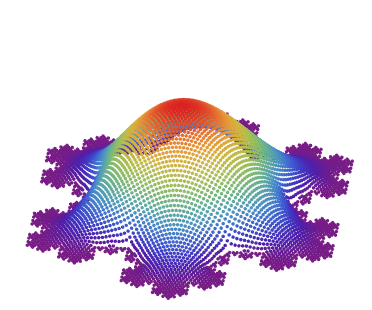} & \includegraphics[width=0.3\textwidth]{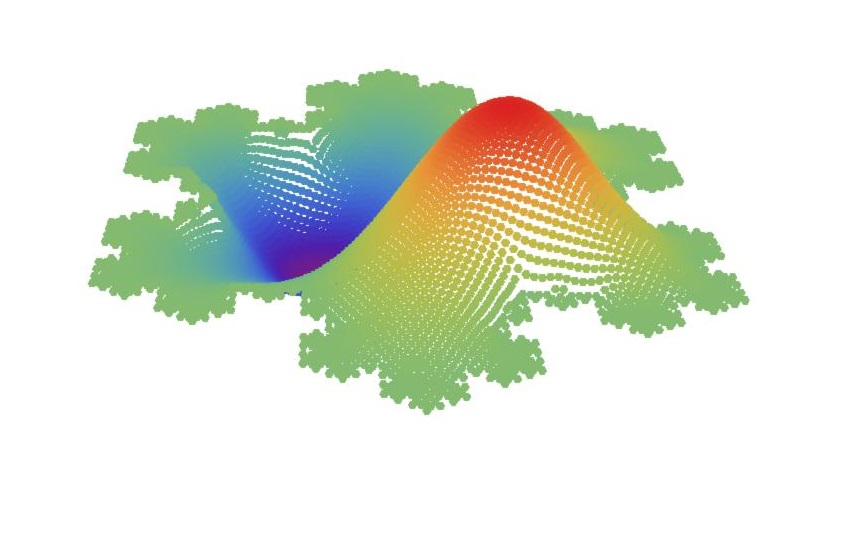} & \includegraphics[width=0.3\textwidth]{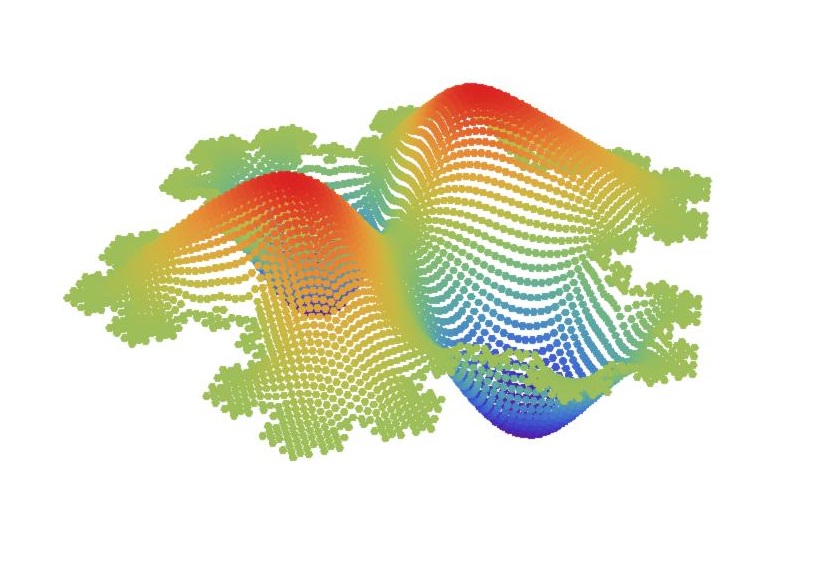}\\
$\tilde{\phi}_1$, $\tilde{\lambda}_1=118.8$ & $\tilde{\phi}_2$, $\tilde{\lambda}_2=294.5$ & $\tilde{\phi}_4$, $\tilde{\lambda}_4=499.8$
\end{tabular}
\caption{Selected eigenvectors of $L_4$ (above) and $\tilde{L}_4$ (below).}
\label{fig:first3eigvecs}
}
\end{figure}

The eigenvalues less than $\lambda=1100$ for each operator are in the Table~\ref{table}.  It is evident that for both operators some eigenvalues occur with multiplicity $1$ and some with multiplicity $2$. This is true throughout the spectrum (not just in the low frequency regime) and is explained by symmetries of $\overline{\Omega}$; this may be verified by the argument given in~\cite{MR1412217}.  At higher frequencies these symmetries are difficult to see from the graphs but readily apparent in contour plots, as illustrated in Figure~\ref{fig:contoureigvecs}.

\begin{table}
{\centering
\begin{tabular*}{\textwidth}{@{\extracolsep{\fill}}lclclclclc|lclc}
\hline
\multicolumn{10}{c}{Eigenvalues of $L_4$} & \multicolumn{4}{c}{Eigenvalues of $\tilde{L}_4$\rule{0pt}{1.2em}}\\
\hline
   j & $\lambda_j$ & j & $\lambda_j$ & j & $\lambda_j$ & j & $\lambda_j$ & j & $\lambda_j$ & j & $\tilde{\lambda}_j$ & j & $\tilde{\lambda}_j$\rule{0pt}{1.1em} \\
   1  &  0       & 8  &  125.4   &15  &  344.8  &22  &  617.9 &29  &  880.9  & 1 & 118.8  &8 &  822.7  \\
   2  &  15.1  & 9  &  171.6   &16  &  363.6  &23  &  651.6 &30  &  880.9   &2 &  294.5  &9 &  822.7  \\
   3  &  15.1  &10  &  171.6   &17  &  482.0  &24  &  651.6 &31  &  1007.2 &3 &  294.5 &10 & 941.7  \\
   4  &  48.1  & 11  &  238.5  &18  &  482.0  &25  &  743.8 &32  &  1014.9 &4 &  499.8 &11 & 950.5  \\
   5  &  48.1  & 12  &  238.5  &19  &  490.9  &26  &  787.9 &33  &  1014.9 &5 &  499.8 &12 & 950.5  \\
   6  &  85.1  & 13  &  313.0  &20  &  490.9  &27  &  851.2 &34  &  1098.6 &6 &  575.1 &13 & 1084.6 \\
   7  &  119.2 &14  &  313.0  &21  &  609.5  &28  &  851.2 &      &             &7 &  630.5 &    & \\
   \hline
\end{tabular*}
}\caption{Eigenvalues of $L_4$ and  $\tilde{L}_4$.}\label{table}\end{table}

\begin{figure}
{\centering
\begin{tabular}{ccc}
\includegraphics[width=.23025\textwidth]{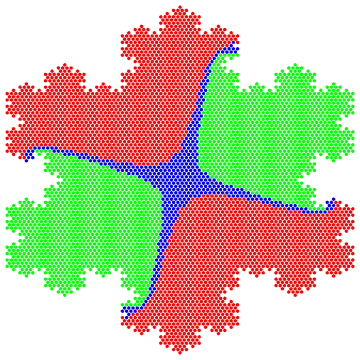}&
\includegraphics[width=0.23025\textwidth]{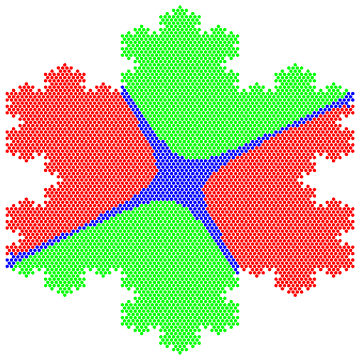}& 
\includegraphics[width=0.23025\textwidth]{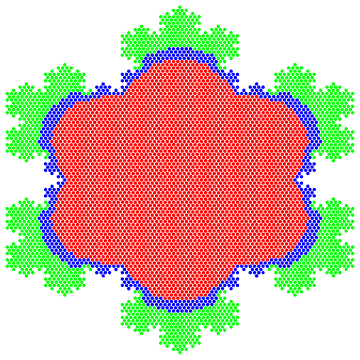}\\
$\phi_4$, $\lambda_4=48.1$ & $\phi_5$, $\lambda_5=48.1$ & $\phi_8$, $\lambda_8=125.4$ \\
\includegraphics[width=0.23025\textwidth]{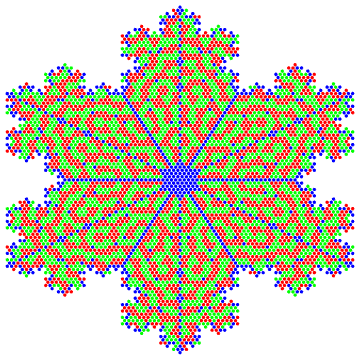}&
\includegraphics[width=0.23025\textwidth]{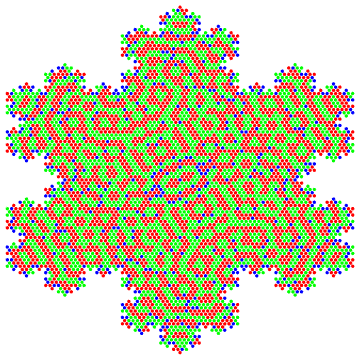}&
\includegraphics[width=0.23025\textwidth]{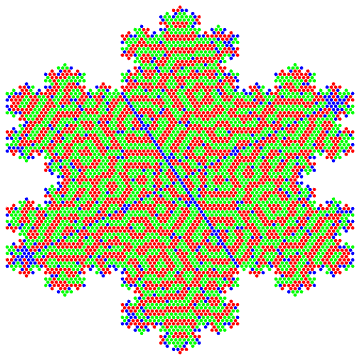} \\
$\phi_{1153}$, $\lambda_{1153}=49965.7$ & $\phi_{1161}$, $\lambda_{1161}=50188.8$ & $\phi_{1162}$,  $\lambda_{1162}=50188.83$
\end{tabular}
\caption{Contour plots  illustrating symmetries of some eigenvectors of $L_4$. Blue indicates $|\phi|\leq\epsilon$, red is $\phi>\epsilon$ and green  is $\phi<\epsilon$, for $\epsilon=0.01$.}
\label{fig:contoureigvecs}
}
\end{figure}

One striking observation about the spectra of  $L_n$ and $\tilde{L}_n$ was the existence of eigenvectors and eigenvalues that were very similar in both.  An example is shown in Figure~\ref{fig:eigvecs}, where it is apparent that $\phi_{34}$ with eigenvalue $\lambda_{34}=1098.6$ is very similar to $\tilde{\phi}_{13}$ with eigenvalue $\tilde{\lambda}_{13}=1084.6$. Such ``pairs'' of eigenvectors, one from $L_n$ and one from $\tilde{L}_n$, both with the same symmetries and similar eigenvalues, were found throughout the low eigenvalue regime. One possible explanation is that there might be Dirichlet eigenfunctions with symmetries that imply they are close to being eigenfunctions of $L$.

To see this, let us try to compute how far a Dirichlet eigenfunction might be from being an eigenfunction of $L$.  Making a computation like that in Theorem~\ref{thm:efnsHolder} we see that a Dirichlet eigenfunction is $u\in H^1_0(\Omega)$ such that  $\energy_\Omega(u,v)=\lambda\langle u,v\rangle_{L^2(\Omega)}$ for all $v\in H^1_0(\Omega)$.  A general element of $\domain(\energy)$ can be written as $v+h_f$, where $f\in\domain(\energy_{\partial\Omega})$ denotes the boundary values and $h_f$ is harmonic. Since $u\in H^1_0(\Omega)$ has zero boundary trace we have
\begin{equation*}
	\energy(u,v+h_f)
	=\energy_\Omega (u,v)
	=\lambda\langle u,v+h_f\rangle_{L^2(\overline\Omega,dm)} - \lambda\langle u,h_f\rangle_{L^2(\Omega)}
	\end{equation*}
so $u$ would also be an eigenfunction of $L$ if $\langle u,h_f\rangle_{L^2(\Omega)}=0$ for all $h_f$.  
Therefore, we would expect to find an eigenfunction and eigenvalue near $(u,\lambda)$ if $\langle u,h_f\rangle_{L^2(\Omega)}$ is small for all $f\in\domain(\energy_{\partial\Omega})$ normalized so that  $ \energy_{\partial\Omega}(f)+\|f\|^2_{L^2 ({\partial\Omega})} =1$.  In consideration of Corollary~\ref{cor:oschassmallext} we might expect  highly oscillatory $f$ to produce $h_f$ with support close to the boundary, where $u$ is small because it has Dirichlet boundary values.  This heuristic suggests that we need only consider slowly varying $f$, and, moreover, that if the symmetries of $u$ are such that it is nearly orthogonal to a large enough subspace of harmonic extensions of slowly varying boundary value functions, then $u$ should be very close to being an eigenfunction of $L$ as well as $\tilde{L}$.  We do not know if this heuristic is the correct explanation for the observed phenomenon, or have estimates that would make the explanation precise.

\begin{figure}
{\centering
\begin{tabular}{cc}
\includegraphics[width=0.5\textwidth]{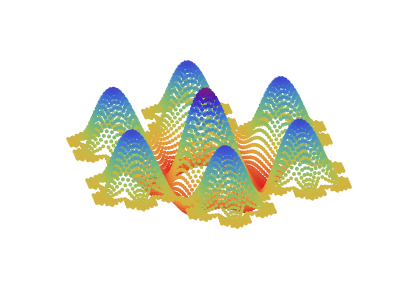}&
\includegraphics[width=0.5\textwidth]{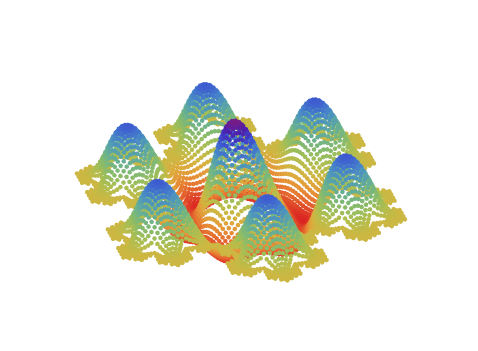}\\
$\phi_{34}$, $\lambda_{34}=1098.6$ & $\tilde{\phi}_{13}$, $\tilde{\lambda}_{13}=1084.6$.
\end{tabular}
\caption{A ``pair'' of eigenvectors, one for $L_4$ and one for $\tilde{L}_4$, with similar symmetry structure and eigenvalues.}
\label{fig:eigvecs}
}
\end{figure}

%As we expect from the number of vertices in the approximation of level $4$, we compute $5557$ eigenvectors of $L_n$ and $4789$ eigenvectors of the Dirichlet Laplacian (there are 768 boundary vertices in level 4).  

\subsection{Localization in the high eigenvalue regime}
The high eigenvalue regime contains the eigenvalues of $L_n$ that are larger than the largest Dirichlet eigenvalue.   In this regime we see a dramatic transition from eigenfunctions that are supported throughout the domain to eigenfunctions localized near the boundary.
Figure~\ref{fig:eiglocaldynamic} illustrates this change using contour plots for eigenfunctions of $L_4$: the first row of images are of eigenfunctions just inside the regime, while the second row shows the progression of localization with increasing frequency.  The last image is the highest eigenvalue eigenfunction of $L_4$. Note that the onset of the localization phenomenon is rapid, occurring across just a few eigenfunctions, and that  continues to sharpen as the frequency increases, but at a decreasing rate.  

Several phenomena in physics involve the localization of eigenmodes (e.g.\ Anderson localization due to a random potential in the Schr\"odinger equation).  A nice survey from a mathematical perspective is in~\cite{MR3038118}.   Some involve low frequency modes (e.g.\ weak localization due to irregular or complex geometry of the domain) and some involve high frequency modes (e.g.\ whispering-gallery modes, bouncing ball modes and focusing modes).  The type that looks most similar to those observed for $L_n$ are the whispering-gallery modes.  These are well understood for convex domains~\cite{KELLER196024}, but despite the fact that for many years  similar modes have been observed in non-convex domains, including domains with pre-fractal boundaries, our understanding of why these occur is incomplete. One approach to high-frequency localized eigenmodes uses quantum billiards, but studying quantum billiards on sets with fractal boundaries seems to be a difficult problem~\cite{MR3203866}. The only literature we are aware of for the Koch snowflake domain is~\cite{MR2731085,MR3021377}, and it does not include modes of the type we see for $L_n$.

\begin{figure}
{\centering
\begin{tabular}{ccc}
\includegraphics[width=0.2302\textwidth]{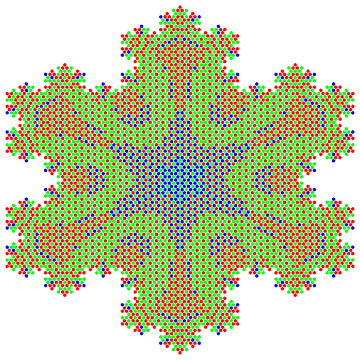}&
\includegraphics[width=0.2302\textwidth]{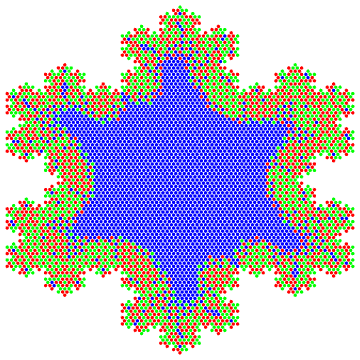}& \includegraphics[width=0.2302\textwidth]{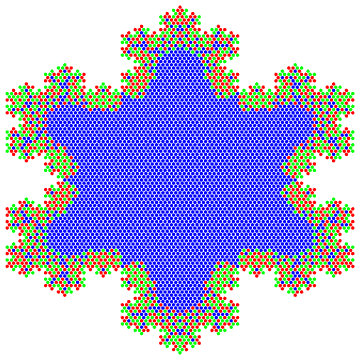}\\
$\phi_{5030}$, $\lambda_{5030}= 118048.66$ & $\phi_{5031}$, $\lambda_{5031}= 119678.65$ &
$\phi_{5033}$, $\lambda_{5033}=121460.72$ \\
\includegraphics[width=0.2302\textwidth]{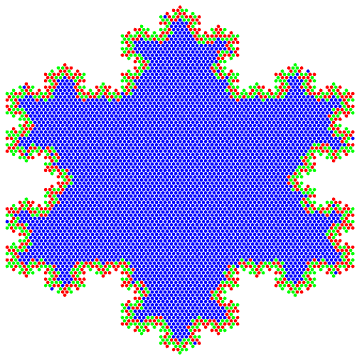} &
\includegraphics[width=0.2302\textwidth]{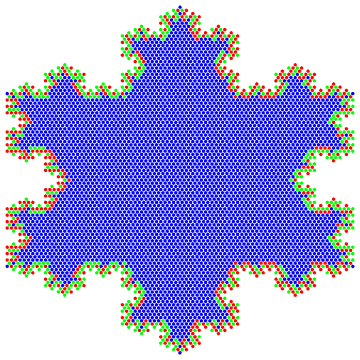} &\includegraphics[width=0.2302\textwidth]{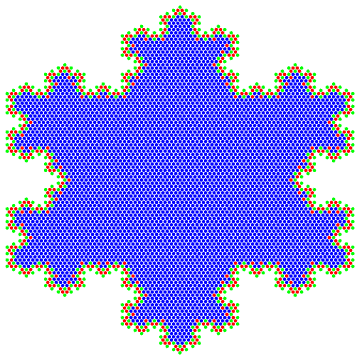} \\
$\phi_{5100}$, $\lambda_{5100}= 185367.41$ & 
$\phi_{5200}$, $\lambda_{5200}= 291364.38$ & $\phi_{5557}$, $\lambda_{5557}= 524999.69$
\end{tabular}
\caption{Contour plots of $L_4$ eigenvectors illustrating localization. Blue indicates $|\phi|\leq\epsilon$, red is $\phi>\epsilon$ and green  is $\phi<-\epsilon$, for $\epsilon=0.01$.}
\label{fig:eiglocaldynamic}
}
\end{figure}

We believe that in our setting the high frequency boundary localizations are not due to a whispering gallery effect, but rather to the relatively weak coupling between the boundary energy and the domain energy.  We can see this, at least heuristically, by repeating the sort of calculation done at the end of subsection~\ref{ssec:loweval} above, but this time considering how close a high frequency eigenfunction of the boundary energy $\energy_{\partial\Omega}$ might be to an eigenfunction of $L$.  Suppose $u\in\domain(\energy_{\partial\Omega})$ satisfies $\energy_{\partial\Omega}(u,v)=\lambda\langle u,v\rangle_{L^2(\Omega,\mu)}$ for all $v\in \domain(\energy_{\partial\Omega})$. If we extend $u$ and $v$ harmonically to $\Omega$, obtaining $h_u$ and $h_v$ respectively, and add an arbitrary $\tilde{v}\in H^1_0(\Omega)$ to $h_v$ so that $h_v+\tilde{v}$ ranges over $\domain(\energy)$, we obtain
\begin{equation*}
	\energy(h_u,h_v+\tilde{v})
	= \energy_{\partial\Omega}(u,v) + \energy_\Omega(h_u,h_v) +\energy_\Omega(h_u,\tilde{v})
	\end{equation*}
Now if $u$ is of high frequency it is highly oscillatory and it is not unreasonable to expect that its piecewise harmonic approximation at large scales is small.  In consideration of Corollary~\ref{cor:oschassmallext} one might expect that then $\energy_\Omega(h_u,h_u)$ is small, and therefore $\energy(h_u,h_v+\tilde{v})\approx\energy_{\partial\Omega}(u,v)$.  Moreover, we have
\begin{equation*}
	\langle h_u,h_v+\tilde{v} \rangle_{L^2(\overline\Omega,dm)}
	= \langle h_u,h_v+\tilde{v} \rangle_{L^2(\Omega,d\mathcal{L}^2)}+ \langle u ,v \rangle_{L^2(\partial\Omega,d\mu)}
	\end{equation*}
and again we see from Corollary~\ref{cor:oschassmallext} that one should expect $\|h_u\|_{L^2(\Omega,d\mathcal{L}^2)}$ to be small if $u$ is high frequency, leading to $\langle h_u,h_v+\tilde{v} \rangle_{L^2(\overline\Omega,dm)}\approx  \langle u ,v \rangle_{L^2(\partial\Omega,d\mu)}$.  Combining this with the corresponding statement for the energies and the assumption that $u$ was an eigenfunction of $\energy_{\partial\Omega}$ we have
\begin{equation*}
	\energy(h_u,h_v+\tilde{v})\approx \energy_{\partial\Omega}(u,v)=\lambda\langle u,v\rangle_{L^2(\Omega,\mu)}
	\approx \langle h_u,h_v+\tilde{v} \rangle_{L^2(\overline\Omega,dm)}
	\end{equation*}
which, if exactly true, would say that $h_u$ was an eigenfunction of $\energy$ with eigenvalue $\lambda$.  This argument is consistent with what we observe, especially the fact that oscillations of the most localized of the high frequency eigenvectors of $L_n$ appear to have wavelength approximately the length of an edge of $\Gamma_n$ and are indeed localized very closely to the boundary curve, as seen in Figure~\ref{fig:lasteigvec}. However we do not have precise arguments or estimates to justify this heuristic reasoning. A less regular looking eigenfunction localized on the boundary is shown in Figure~\ref{eigenfunction5550}.

{\centering
	\begin{figure}
		\includegraphics[width=0.5\textwidth]{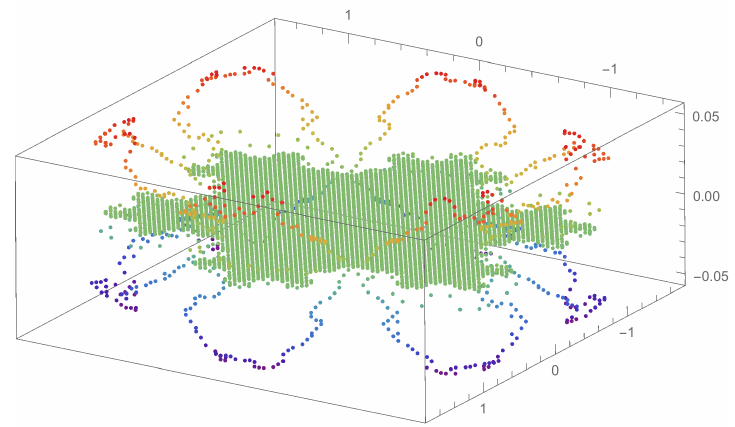}
		\caption{The 5550th $L_4$ eigenfunction, $\phi_{5550}$.   }\label{eigenfunction5550}
	\end{figure}
}  

\begin{figure}
{\centering
\includegraphics[width=0.5\textwidth]{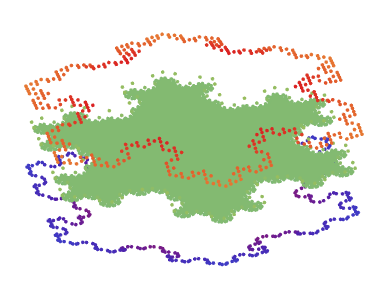}
\caption{ The last $L_4$ eigenfunction, $\phi_{5557}$ with $\lambda_{5557}= 524999.69$ oscillates so rapidly and is so small away from $\partial\Omega$ that its graph appears to split into copies of $\partial\Omega$ at a positive and negative height and $\Omega$ at height zero.}
\label{fig:lasteigvec}
}
\end{figure}

\section{A landscape approach to high frequency localization}
\label{FilocheMayboroda Argument}
Although many aspects of wave localization problems remain open, pioneering work of Filoche and Mayboroda~\cite{filoche1,MR2990982,ADFJM} has led to a great deal of progress on low-frequency localization problems.  For a suitable elliptic differential operator $\mathcal{L}$ on a bounded open set $U\subset\mathbb{R}^n$, they introduce a ``landscape'' function $u:U\to\mathbb{R}$ by $u=\int_U |G(x,y)|$, where $G$ is the Dirichlet Green's function, and show that an $L^\infty$ normalized Dirichlet eigenfunction $\phi$ with $\mathcal{L}\phi=\lambda\phi$ satisfies $|\phi(x)|\leq\lambda u(x)$.  In consequence, if there is a region where $u$ is small  (a valley) that is enclosed by a set where $u$ is large (a set of peaks), then a low frequency eigenfunction that is non-zero in the valley must be confined to that valley.  This reduces the problem of low frequency eigenfunction localization to studying the structure of the landscape function, and especially its level sets.  The latter involves a great deal of sophisticated mathematics, but is numerically very simple to implement.

Our purpose here is to write a high frequency variant of the argument of Filoche and Mayboroda, and to see that its numerical implementation predicts the high frequency localization seen in our data.  
We do so by introducing a high frequency landscape function associated to a linear map $L$.
\begin{definition}
For $L:\mathbb{R}^d\to\mathbb{R}^d$ is a linear map expressed as a matrix $\bigl[L_{ij}\bigr]$ with respect to the standard basis, we let $|L|:\mathbb{R}^d\to\mathbb{R}^d$ denote the linear map with matrix   $\bigl[|L_{ij}|\bigr]$ and define the high frequency landscape vector to be $u=|L|(1,\cdots,1)^{T}$. Thus $u=(u_1,\cdots,u_d)$ with $u_i=\sum_j |L_{ij}|$.
\end{definition}
We remark that the high frequency landscape depends on $|L|$ whereas the low frequency landscape function depended on $|G|=|\mathcal{L}^{-1}|$. The following elementary theorem is the analogue of the Filoche-Mayboroda bound using the high frequency landscape function.
\begin{theorem}
\label{landscapeTheorem}
Let $\phi = (\phi_1, \ldots , \phi_d)^T \in \real^d$ be an eigenvector of $L$ corresponding to an eigenvalue $\lambda>0$ normalized such that $\max_{1 \leq i \leq d} |\phi_i|=1$. Then for each $i=1,\cdots,d$,
\begin{equation}\label{eq:highfreqest}
\phi_i  \leq \frac{1}{\lambda} u_i.
\end{equation}
\end{theorem}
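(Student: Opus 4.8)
The plan is to read off the bound directly from the eigenvector equation, exactly as in the Filoche--Mayboroda argument but with the roles of $L$ and $L^{-1}$ interchanged. Writing the eigenvalue relation $L\phi=\lambda\phi$ in coordinates gives, for each $i$,
\begin{equation*}
\lambda\phi_i=\sum_{j=1}^d L_{ij}\phi_j.
\end{equation*}
Since $\lambda>0$, it suffices to estimate the right-hand side from above by $u_i$.

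First I would apply the triangle inequality term by term: $\sum_j L_{ij}\phi_j\leq\bigl|\sum_j L_{ij}\phi_j\bigr|\leq\sum_j|L_{ij}|\,|\phi_j|$. Then I would invoke the normalization $\max_{1\leq j\leq d}|\phi_j|=1$, which gives $|\phi_j|\leq1$ for every $j$, so that $\sum_j|L_{ij}|\,|\phi_j|\leq\sum_j|L_{ij}|=u_i$ by the definition of the high frequency landscape vector. Chaining these inequalities yields $\lambda\phi_i\leq u_i$, and dividing by $\lambda>0$ gives \eqref{eq:highfreqest}. (One in fact obtains the slightly stronger $|\phi_i|\leq\lambda^{-1}u_i$ by the same chain, but the one-sided statement is all that is needed.)

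There is no real obstacle here: the entire content is the triangle inequality combined with the $L^\infty$ normalization, and the hypothesis $\lambda>0$ is used only to divide through while preserving the inequality. The one point worth a sentence of comment is conceptual rather than technical --- namely why this is the correct ``high frequency'' analogue: in the original landscape bound one expands $\phi=\lambda\,\mathcal{L}^{-1}\phi$ and estimates via $|G|=|\mathcal{L}^{-1}|$, whereas here one expands $\phi=\lambda^{-1}L\phi$ and estimates via $|L|$, so the bound is useful precisely when $\lambda$ is large and the row sums $u_i$ of $|L|$ are small, i.e.\ near vertices where few edges carry large conductance. I would include that remark immediately after the proof rather than inside it.
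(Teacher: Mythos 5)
Your proof is correct and is essentially identical to the paper's: both expand the eigenvalue relation in coordinates as $\lambda\phi_i=\sum_j L_{ij}\phi_j$, bound the sum by $\sum_j|L_{ij}|=u_i$ using the triangle inequality and the normalization $|\phi_j|\leq1$, and divide by $\lambda>0$. Your parenthetical observation that the argument in fact yields the two-sided bound $|\phi_i|\leq\lambda^{-1}u_i$ is a small but accurate sharpening of what the paper states.
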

\begin{proof}
Since $\lambda\phi = L \phi$ we may compute using the standard basis $\{e_i\}$
\begin{equation*}
\lambda\phi_i 
= \langle e_i,L\phi \rangle
=\sum_j \phi_j \langle e_i,L\phi_j \rangle
=\sum_j \phi_j L_{ij}
\leq \sum_j |L_ij|
= u_i
\end{equation*}
where the inequality used the normalization that $|\phi_j|\leq1$ for all $j$.
\end{proof}

\begin{figure}
{\centering
%\begin{tabular}{cc}
%\includegraphics[width=0.5\textwidth]{landscape1.png}&
\includegraphics[width=0.6\textwidth]{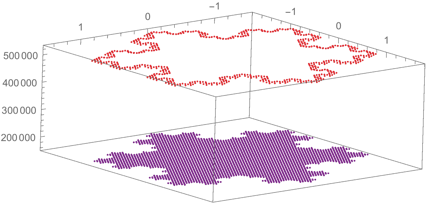}
%\end{tabular}
\caption{The high frequency landscape for $L_4$}
\label{fig:landscape}
}
\end{figure}

The result of numerical computation of the high frequency landscape function for $L=L_4$, our discrete Laplacian, is shown in Figure~\ref{fig:landscape}.  It appears to be constant on $\Omega$ and on $\partial\Omega$, but in fact we find that it attains two values on $\partial\Omega$.  This can be computed directly from the formula in Definition~\ref{graphLaplace}.
\begin{lemma}
The high frequency landscape function $u$ for $L_n$ has constant value $8\cdot3^{2n+1}$ on $\Omega$ and attains the two values $2^{4n+3}$ and $2^{4n+3}+3\cdot 2^{2n+2}$ on $\partial\Omega$.
\end{lemma}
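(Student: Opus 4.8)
The plan is to compute, for each vertex $p\in\V_n$, the quantity $u(p)=\sum_{q}|L_n(p,q)|$ directly from the formula in Definition~\ref{graphLaplace}, namely $L_nu(p)=\frac{2}{m_n(p)}\sum_{q\sim_n p}c_n(p,q)(u(p)-u(q))$. Reading off the matrix entries of $L_n$ in the standard basis of $l(\V_n)$, the diagonal entry at $p$ is $\frac{2}{m_n(p)}\sum_{q\sim_n p}c_n(p,q)$ and the off-diagonal entry at $(p,q)$ is $-\frac{2}{m_n(p)}c_n(p,q)$; since all $c_n(p,q)\geq0$, taking absolute values and summing over the row gives $u(p)=\frac{4}{m_n(p)}\sum_{q\sim_n p}c_n(p,q)$. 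So the whole computation reduces to bookkeeping: counting the neighbors of $p$, classifying each incident edge as interior or boundary, and plugging in the values of $c_n$ and $m_n$ from \eqref{eq:graphEnergywieghts} and \eqref{eq:discreteMeasure}.

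First I would handle the interior vertices. Every interior vertex $p$ of the triangular mesh $\Gamma_n$ has exactly $6$ incident edges, all of them interior edges, so each $c_n(p,q)=1$, and $m_n(p)=9^{-n}$. Hence $u(p)=\frac{4}{9^{-n}}\cdot 6 = 24\cdot 9^n = 8\cdot 3\cdot 3^{2n}=8\cdot 3^{2n+1}$, which is the claimed interior value. The only subtlety here is to be sure that a vertex counted as ``interior'' really does have full degree $6$ with no boundary edges incident to it; this follows from the construction in Section~\ref{sec:Inductive mesh}, where boundary vertices are precisely those lying on boundary edges, so any edge incident to an interior vertex is automatically an interior edge, and the local picture around an interior vertex of the subdivided triangulation is the standard hexagonal one of degree $6$.

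Next I would handle the boundary vertices, where $m_n(p)=4^{-n}$, so $u(p)=4\cdot 4^n\sum_{q\sim_n p}c_n(p,q)=2^{2n+2}\sum_{q\sim_n p}c_n(p,q)$. The key geometric fact, visible from the mesh construction (Figure~\ref{fig:mesh_1}) and from the identification of the boundary subgraph with the graphs $V_n(K)$ of Section~\ref{sec:Koch Snowflake Domain}, is that every boundary vertex is an endpoint of exactly $2$ boundary edges (the boundary is a topological curve, so boundary vertices have boundary-degree $2$), each contributing conductance $4^n$, and is incident to some number of interior edges each contributing $1$. One then checks from the picture that there are exactly two types of boundary vertex: the ``convex/outward'' corner type, which sees $3$ interior edges, and the ``concave/inward'' type, which sees $6$ interior edges (or the reverse labelling — the point is only that the two multiplicities are $3$ and $6$). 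For the first type $\sum c_n = 2\cdot 4^n + 3$, giving $u(p)=2^{2n+2}(2\cdot 4^n+3)=2^{4n+3}+3\cdot 2^{2n+2}$; for the second type $\sum c_n=2\cdot4^n + 6$... — wait, that would give $2^{4n+3}+6\cdot2^{2n+2}$, not matching, so the correct split must be $0$ and $3$ interior edges, i.e. $\sum c_n$ equal to $2\cdot4^n$ or $2\cdot4^n+3$, yielding the two stated values $2^{4n+3}$ and $2^{4n+3}+3\cdot2^{2n+2}$. Thus the real content of the proof is the combinatorial claim that boundary vertices of $\Gamma_n$ come in exactly two local types, one incident to no interior edges and one incident to exactly three, each incident to exactly two boundary edges.

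The main obstacle is therefore this local classification of boundary vertices, which I would establish by induction on $n$ following the subdivision rule: when a scale-$n$ triangle is refined into nine and new triangles are glued onto boundary-edge midpoints, one tracks how many interior edges meet each new boundary vertex, checking the two cases noted parenthetically in Section~\ref{sec:Inductive mesh} (a triangle may have up to two boundary edges). Once the degree bookkeeping is pinned down, the arithmetic is immediate. I would present the interior computation in one line, state the boundary-vertex dichotomy as the crux with a reference to the construction and to the boundary graphs $V_n(K)$, and then finish with the two one-line evaluations of $u(p)$.
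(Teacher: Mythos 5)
Your computation is correct and is exactly the direct row-sum calculation from Definition~\ref{graphLaplace} that the paper itself points to (it gives no further proof): $u(p)=\frac{4}{m_n(p)}\sum_q c_n(p,q)$, which yields $24\cdot 9^n=8\cdot3^{2n+1}$ at degree-$6$ interior vertices and $2^{2n+2}\cdot 2\cdot4^n$ or $2^{2n+2}(2\cdot4^n+3)$ at the two boundary types. The one step to tighten is that you obtained the $0$-versus-$3$ interior-edge dichotomy by matching the stated answer rather than deriving it; it follows directly from the geometry, since every vertex of the level-$n$ prefractal polygon has interior angle $60^\circ$ (one incident triangle, hence two boundary edges and no interior edges) or $240^\circ$ (four incident triangles, hence two boundary edges and three interior edges), and no other angles occur because subdivision preserves old angles and creates new ones only of these two kinds.
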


In consideration of the $L^\infty$ normalization of the eigenvector $\phi$ the following is immediate from the lemma and Theorem~\ref{landscapeTheorem}.
\begin{corollary}
The constraint~\eqref{eq:highfreqest} is effective in $\Omega$ if $\lambda\geq8\cdot3^{2n+1}$.
\end{corollary}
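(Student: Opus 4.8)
The plan is to treat this as an immediate consequence of the preceding Lemma combined with the landscape bound~\eqref{eq:highfreqest} of Theorem~\ref{landscapeTheorem}, so that the work is really one of interpretation rather than computation. First I would pin down what ``effective'' should mean. Because $\phi$ is normalized so that $\max_{1\leq i\leq d}|\phi_i|=1$, every coordinate already obeys the trivial bound $\phi_i\leq 1$; the landscape estimate $\phi_i\leq\frac1\lambda u_i$ therefore conveys nontrivial information at a vertex $p$ precisely when $\frac1\lambda u_p\leq 1$, i.e.\ when it improves on the normalization bound. Accordingly I would read ``effective in $\Omega$'' as the assertion that $\frac1\lambda u_p\leq 1$ for every interior vertex $p$, which is exactly the regime in which~\eqref{eq:highfreqest} starts to force the eigenvector to be small away from the boundary.

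Next I would substitute the interior value of the landscape function supplied by the Lemma. For any vertex $p\in\Omega$ the Lemma gives the single constant $u_p=8\cdot3^{2n+1}$, independent of $p$. Feeding this into $\frac1\lambda u_p\leq 1$ produces the equivalence $\frac{8\cdot3^{2n+1}}{\lambda}\leq 1\iff\lambda\geq 8\cdot3^{2n+1}$, which is precisely the stated threshold. Combined with Theorem~\ref{landscapeTheorem}, this shows that as soon as $\lambda\geq 8\cdot3^{2n+1}$ one has $\phi_p\leq\frac{8\cdot3^{2n+1}}{\lambda}\leq 1$ at every interior vertex, with the bound dropping strictly below $1$ once $\lambda$ exceeds the threshold.

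There is no serious obstacle here; the only point deserving care is the contrast with the boundary values from the Lemma, which I would record as a short remark to justify saying ``effective in $\Omega$'' rather than on all of $\overline\Omega$. Writing the boundary values as $2^{4n+3}=8\cdot16^n$ and the interior value as $8\cdot3^{2n+1}=24\cdot9^n$, the same substitution shows that the boundary constraint only becomes effective at the far larger threshold $\lambda\geq 2^{4n+3}$ (note $16^n$ dominates $9^n$, so $2^{4n+3}>8\cdot3^{2n+1}$ for $n\geq 2$, in particular for the values of $n$ relevant to our computations). Hence in the intermediate range $8\cdot3^{2n+1}\leq\lambda<2^{4n+3}$ the estimate~\eqref{eq:highfreqest} pins down $\phi$ to be small at interior vertices while leaving the boundary coordinates essentially unconstrained, which is exactly the localization mechanism the landscape function is designed to detect. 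I would close by remarking that this is consistent with the numerically observed onset of boundary localization.
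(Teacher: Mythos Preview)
Your proposal is correct and matches the paper's own treatment: the paper states that the corollary is immediate from the Lemma and Theorem~\ref{landscapeTheorem} in light of the $L^\infty$ normalization of $\phi$, which is precisely the interpretation and substitution you carry out. Your added remark comparing the interior threshold $8\cdot3^{2n+1}$ with the larger boundary threshold $2^{4n+3}$ is not in the paper's proof but is a helpful clarification of why the constraint is said to be effective ``in $\Omega$'' specifically.
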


Observe that for $L_4$ this constraint is $\lambda\geq 157464$.  As expected, this is somewhat higher than the value $\lambda=118038.02$ at which we identified the regime change in the spectrum, because we did not have complete localization at the regime change.  

\begin{remark}
It should be noted that Theorem~\ref{landscapeTheorem} works well in our setting simply because the values in the matrix $L_n$ are very different on boundary edges than on interior edges. This is the same reason that drove our heuristic reasoning in Section~\ref{sec:numericalResults} regarding the decoupling of the interior and boundary energies and their effect on the spectrum of $L$.  The high frequency landscape approach would not be expected to predict whispering gallery modes, or localized modes due to quantum scarring, as these are not due solely to local properties of the Laplacian. 
\end{remark}

\section*{Acknowledgments} The   authors are  grateful to 
Michael Hinz, 
Maria Rosaria Lancia, 
Svitlana Mayboroda, 
Anna Rozanova-Pierrat and %especially 
Tatiana Toro for helpful discussions. 

\bibliographystyle{plain}
\bibliography{snowflake}

\end{document}